\newtheorem{Theorem}{Theorem}[section]
\newtheorem{Lemma}[Theorem]{Lemma}
\newtheorem{Remark}[Theorem]{Remark}
\numberwithin{equation}{section}
\begin{document}

\def\le{\left}
\def\r{\right}
\def\cost{\mbox{const}}
\def\a{\alpha}
\def\d{\delta}
\def\ph{\varphi}
\def\e{\epsilon}
\def\la{\lambda}
\def\si{\sigma}
\def\La{\Lambda}
\def\B{{\cal B}}
\def\A{{\mathcal A}}
\def\L{{\mathcal L}}
\def\O{{\mathcal O}}
\def\bO{\overline{{\mathcal O}}}
\def\F{{\mathcal F}}
\def\K{{\mathcal K}}
\def\H{{\mathcal H}}
\def\D{{\mathcal D}}
\def\C{{\mathcal C}}
\def\M{{\mathcal M}}
\def\N{{\mathcal N}}
\def\G{{\mathcal G}}
\def\T{{\mathcal T}}
\def\R{{\mathbb R}}
\def\I{{\mathcal I}}

\def\bw{\overline{W}}
\def\phin{\|\varphi\|_{0}}
\def\s0t{\sup_{t \in [0,T]}}
\def\lt{\lim_{t\rightarrow 0}}
\def\iot{\int_{0}^{t}}
\def\ioi{\int_0^{+\infty}}
\def\ds{\displaystyle}
\def\pag{\vfill\eject}
\def\fine{\par\vfill\supereject\end}
\def\acapo{\hfill\break}

\def\beq{\begin{equation}}
\def\eeq{\end{equation}}
\def\barr{\begin{array}}
\def\earr{\end{array}}
\def\vs{\vspace{.1mm}   \\}
\def\rd{\reals\,^{d}}
\def\rn{\reals\,^{n}}
\def\rr{\reals\,^{r}}
\def\bD{\overline{{\mathcal D}}}
\newcommand{\dimo}{\hfill \break {\bf Proof - }}
\newcommand{\nat}{\mathbb N}
\newcommand{\E}{\mathbb E}
\newcommand{\Pro}{\mathbb P}
\newcommand{\com}{{\scriptstyle \circ}}
\newcommand{\reals}{\mathbb R}

\newcommand{\red}[1]{\textcolor{red}{#1}}

\def\Amu{{A_\mu}}
\def\Qmu{{Q_\mu}}
\def\Smu{{S_\mu}}
\def\H{{\mathcal{H}}}
\def\Im{{\textnormal{Im }}}
\def\Tr{{\textnormal{Tr}}}
\def\E{{\mathbb{E}}}
\def\P{{\mathbb{P}}}
\def\span{{\textnormal{span}}}
%opening
\title{Large deviations for the two-dimensional stochastic Navier-Stokes equation with vanishing  noise correlation\thanks{This material is based upon work supported by the National Science Foundation under grant No. 0932078000, while the authors were in residence at the Mathematical Sciences Research Institute in Berkeley, California, during the Fall 2015 semester.}}
\author{Sandra Cerrai\thanks{Partially supported by the NSF grant DMS 1407615.}\\
\normalsize University of Maryland, College Park\\ United States
\and
Arnaud Debussche\thanks{Partially supported by the French government thanks to the
ANR program Stosymap and the ``Investissements d'Avenir" program ANR-11-LABX-0020-01. }\\
\normalsize IRMAR, ENS Rennes, CNRS, UBL, Bruz\\ France}
\date{}

\date{}

\maketitle

\begin{abstract}
We are dealing with the validity of a large deviation principle for the two-dimensional  Navier-Stokes equation, with periodic boundary conditions, perturbed by a Gaussian random forcing. We are here interested in the regime where  both the strength of the noise and its correlation are vanishing, on a length scale $\e$ and $\d(\e)$, respectively, with $0<\e,\ \d(\e)<<1$. Depending on   the relationship between $\e$ and $\d(\e)$ we will prove  the validity of the large deviation principle in different functional spaces.

\end{abstract}

\section{Introduction}
\label{sec1}
In the present paper we are dealing with the following randomly forced two-dimensional incompressible Navier-Stokes equation {with periodic boundary conditions, defined on the domain $D=[0,2\pi]^2$,
\begin{equation}
\label{maineq}
\left\{
\begin{array}{l}
\ds{\partial _t u(t,x)=\Delta u(t,x)-(u(t,x)\cdot\nabla)u(t,x)+\nabla p(t,x)+\sqrt{\e}\,\partial_t\, \xi^{\d}(t,x),\ \ \ \ x \in\,D,\ \ t\geq 0,}\\
\vs
\ds{\text{div}\, u(t,x)=0,\ \ \ \ x \in\,D,\ \ t\geq 0,\ \ \ \ \ u(0,x)=u_0(x),\ \ \ x \in\,D.}
\end{array}\r.
\end{equation}
Here $u$ denotes the velocity and $p$ denotes the pressure of the fluid. Moreover, $\xi^\d(t,x)$ denotes a Gaussian random forcing.
We are here interested in the regime where the noise is weak, that is its typical strength is of order $\sqrt{\e}<<1$,  and almost white in space, that is its correlation decays on a lenght-scale $\d<<1$. 

As well known, in order to have well posedness in $C([0,T];[L^2(D)]^2)$ for equation \eqref{maineq}, the Gaussian noise $\xi^\d$ cannot be white in space. In fact, white noise in space and time has been considered in \cite{DPD02}, where the well-posedness of equation \eqref{maineq} has been studied in some Besov spaces of negative exponent, for $\mu$-almost every initial condition, for a suitable centered Gaussian measure  $\mu$. Here, we assume that for any fixed $\d>0$ the noise $\xi^\d(t,x)$ is sufficiently smooth in the space variable $x \in\,D$ to guarantee that for any initial condition $u_0 \in\,[L^2(D)]^2$ there exists a unique generalized solution in $C([0,T];[L^2(D)]^2)$ (see Section \ref{sec2} for all details).

As a consequence of the contraction principle and of some continuity properties of the solution of equation \eqref{maineq}, it is possible to show  that,  for any  $\d>0$ fixed, the family  $\{{\cal L}(u_{\d,\e})\}_{\e>0}$ given by the laws of the solutions of equation \eqref{maineq} satisfies  a large deviation principle in $C([0,T];L^2(D))$, for any $T>0$ fixed, with rate $\e$ and action functional
\[I_T^\d(f)=\frac 12\int_0^T|Q_\d^{-1}\le(f^\prime(t)-A f(t)-b(f(t))\r)|_H^2\,dt,\]
where $A$ is the Stokes operator, $b$ is the Navier-Stokes nonlinearity and $Q_\d$ is the square root of the covariance of the noise $\xi^\d$ (see Section \ref{sec2} for all definitions and notations and also \cite{BC_2015}).

In \cite{BCF_2013},  the limiting behaviors, as $\d\downarrow 0$, for the large deviation action  functional $I_T^\d$, as well as for the corresponding quasipotential $V^\d$ have been studied. Namely it has been  proven that if the operator $Q_\d$ converges strongly to the identity operator, and a few other conditions are satisfied, then the operators $I_T^\d$ and $V^\d$ converge pointwise, as $\d\downarrow 0$,  to the operator
\begin{equation}
\label{functional-white}
I_T(f)=\frac 12\int_0^T|f^\prime(t)-A f(t)-b(f(t))|_H^2\,dt,\end{equation}
and the operator
\[V(x)=|x|^2_{[H^1(D)]^2},\]
respectively,
where $I_T$ and $V$ would be  the natural candidates for the large deviation action functional in $C([0,T];[L^2(D)]^2)$ and the quasi-potential in $[L^2(D)]^2$, in case equation \eqref{maineq}, perturbed by space-time white noise, were well-posed in $[L^2(D)]^2$. 

Unlike in the present paper, in \cite{BCF_2013} we were interested in the large time limiting behavior of equation \eqref{maineq}, in the case $0<\e<<\d<<1$. Actually, here we are not taking first the limit in $\e$ and then in $\d$, but  we are considering the case in which the parameter $\d$ is a function  of the parameter $\e$ that describes the intensity of the noise, and
\begin{equation}
\label{cond}
\lim_{\e\to 0}\d(\e)=0.\end{equation}
By using the weak convergence approach to large deviations, as described in \cite{BDM} for SPDEs, we show that in this case  the family $\{u_{\e,\d(\e)}\}_{\e>0}$ satisfies a large deviation principle in the space 
$C([0,T];\mathcal{B}^\si_{p}(D))$, where $\mathcal{B}^\si_{p}(D)$ is a suitable Besov space of functions, with $\si<0$ and $p\geq 2$. Moreover, in the case condition \eqref{cond} is integrated with the condition
\[\lim_{\e\to 0}\e\,\d(\e)^{-\eta}=0,\]
for some $\eta>0$, we prove that the family $\{u_{\e,\d(\e)}\}_{\e>0}$ satisfies a large deviation principle in 
$C([0,T];H)$. In both cases, the action functional that describes the large deviation principle is the operator $I_T$ defined in \eqref{functional-white}.

We would like to mention the fact that in \cite{HW} Hairer and Weber have studied a similar problem for the equation
\begin{equation}
\label{hairer}
\left\{
\begin{array}{l}
\ds{\partial _t u(t,x)=\Delta u(t,x)+c\,u(t,\xi)-u^3(t,\xi)+\sqrt{\e}\,\partial_t\, \xi^{\d}(t,x),\ \ \ \ x \in\,D,\ \ t\geq 0,}\\
\vs
\ds{u(0,x)=u_0(x),\ \ \ x \in\,D,}
\end{array}\r.
\end{equation}
where $D$ is a bounded smooth domain either in $\mathbb{R}^2$ or in $\mathbb{R}^3$. By using the recently developed theory of regularity structures, they study the validity of a large deviation principle for the solutions $u_{\e,\d}$ of equation \eqref{hairer}, in the case condition \eqref{cond} is satisfied. Actually, they consider the renormalized equation 
\[\left\{
\begin{array}{l}
\ds{\partial _t u(t,x)=\Delta u(t,x)+(c+3\,\e\,c_{\d(\e)}^{(1)}-9\,\e^2\,c_{\d(\e)}^{(2)})\,u(t,\xi)-u^3(t,\xi)+\sqrt{\e}\,\partial_t\, \xi^{\d}(t,x),}\\
\vs
\ds{u(0,x)=u_0(x),\ \ \ x \in\,D,}
\end{array}\r.\]
where $c_{\d(\e)}^{(1)}$ and $c_{\d(\e)}^{(2)}$ are the constants, depending on the dimension of the underlying space,  arising from the renormalization procedure, and they prove that if \eqref{cond} holds, then the family of solutions $\{u_{\e,\d(\e)}\}_{\e>0}$ satisfies a large deviation principle in $C([0,T],C^\eta(D))$, where $C^\eta(D)$ is some space of functions of negative regularity in space, with action functional 
\[I_T(f)=\frac 12\int_0^T|\partial _t f-\Delta f-c\,f+f^3|_{L^2(D)}^2\,dt.\]
They also study the large deviation principle for equation \eqref{hairer} and prove that if, in addition to \eqref{cond} the following condition holds
\[\lim_{\e\to 0}\e\,\log\d(\e)^{-1}=\la \in\,[0,\infty),\ \ \ \text{for}\ d=2, \ \ \ \ \ \ \ \lim_{\e\to 0}\e\,\d(\e)^{-1}=\la \in\,[0,\infty),\ \ \ \text{for}\ d=3,\] 
then the family $\{u_{\e,\d(\e)}\}_{\e>0}$ satisfies a large deviation principle in $C([0,T],C^\eta(D))$, with respect to the action functional
\[I_T^\la(f)=\frac 12\int_0^T|\partial _t f-\Delta f+c_\la\,f+f^3|_{L^2(D)}^2\,dt,\]
for some explicitly given constant $c_\la$, depending on $\la$ and $d$ and  such that $c_0=-c$.

\section{Notations and preliminaries}
\label{sec2}

We consider here the following incompressible Navier-Stokes  equation with periodic boundary conditions on the two-dimensional domain
$D=[0,2\pi]^2$,
\begin{equation}
\label{eq1}
\left\{
\begin{array}{l}
\ds{\partial _t u(t,x)=\Delta u(t,x)-(u(t,x)\cdot\nabla)u(t,x)+\nabla p(t,x)+\sqrt{\e}\,\partial_t\, \xi^{\d}(t,x),\ \ \ \ x \in\,D,\ \ t\geq 0,}\\
\vs
\ds{\text{div}\, u(t,x)=0,\ \ \ \ x \in\,D,\ \ t\geq 0,\ \ \ \ \ u(0,x)=u_0(x),\ \ \ x \in\,D,}\\
\vs
\ds{u(t,x_1,0)=u(t,x_1,2\pi),\ \ \ u(t,0,x_2)=u(t,2\pi,x_2),\ \ \ (x_1,x_2) \in\,[0,2\pi]^2,\ \ \ t\geq 0,}
\end{array}\r.
\end{equation}
where $0<\e,\d<<1$ are some small positive constants.
Here $\xi^\d(t,x)$ is a Wiener process on $[L^2(D)]^2$, with covariance $Q_\delta$ to be defined below.

We assume that the initial data $u_0$ and the noise $\xi^\d$ are zero average in space. So that $u(t)$ remains with zero average for all time. It is not difficult to get
rid of this assumption.

In what follows, we will denote by $H$ the subspace of $[L^2(D)]^2$ consisting of periodic, divergence free and zero average functions, that is
\[H=\left\{ u \in\,[L^2(D)]^2\,:\,\int_D u(x)\,dx=0,\ \ \text{div}\,u=0,\ \ u\ \text{is periodic in}\ D\,\r\}.\]
 $H$ turns out to be a Hilbert space, endowed with the standard scalar product  $\le<\cdot,\cdot\r>_H$ inherited from $[L^2(D)]^2$.
Moreover, we will denote by $P$ the Leray-Helmholtz projection of $[L^2(D)]^2$ onto $H$. 

Now,  for any $k=(k_1,k_2) \in\,\mathbb{Z}_0^2=\mathbb{Z}^2\setminus \{(0,0)\}$ we define
\[e_k(x)=\frac 1{2\pi}\,\frac{k^\perp}{|k|}\,e^{i\,x\cdot k}=e^{i\,(x_1 k_1+x_2k_2)},\ \ \ x=(x_1,x_2) \in\,D,\ \ \ k \in\,{\mathbb Z}_0,\]
where
\[k^\perp=(k_2,-k_1),\ \ \ \ \ |k|=\sqrt{k_1^2+k_2^2}.\]
It turns out that the family $\{e_k\}_{k \in\,\mathbb{Z}^2_0}$ is a complete orthonormal system in $H_{\mathbb{C}}$, the complexification of the space $H$. For every $s \in\,\mathbb{R}$, we define
\[H^s(D):=\le\{\,u:D\to\mathbb{R}\,:\ |u|_{H^s(D)}^2:=\sum_{k \in\,\mathbb{Z}^2_0}|\langle u,e_k\rangle|^2|k|^{2s}<\infty\,\r\}.\]

Next, for $q \in\,\nat$, we set $\d_q:=\Pi_{2^q}-\Pi_{2^{q-1}}$, where  $\Pi_n$ denote the projection of $H$ into $H_n:=\text{span}\{e_k\}_{|k|\leq n}$. Namely
\[\d_q u=\sum_{2^{q-1}<|k|\leq 2^q} \le<u,e_k\r>_H e_k,\ \ \ \ \ \ u \in\,\bigcup_{s \in\,\reals} H^s(D).\]
For any $\si \in\,\mathbb{R}$ and $p\geq 1$, we define
\[\ds{\mathcal{B}^\si_{p}(D):=\le\{\,u \in\,\bigcup_{s \in\,\reals} H^s(D)\ :\ \sum_{q \in\,\nat} 2^{pq\si}|\d_q u|_{L^p(D)}^p<\infty\r\}.}\]
$\mathcal{B}^\si_{p}(D)$ turns out to be a Banach space, endowed with the norm
\[|u|_{\mathcal{B}^\si_{p}(D)}:=\le(\,\sum_{q \in\,\nat}2^{pq\si}|\d_q u|_{L^p(D)}^p\r)^{\frac 1p}.\]

\bigskip

Now, we define the Stokes operator
\[A u=P \Delta u,\ \ \ \ \ u \in\,D(A)=H\cap [H^2(D)]^2,\]
where $P$ is the Helmodtz projection.
It is immediate to check that for any $k \in\,\mathbb{Z}^2_0$
\[A e_k=-|k|^2 e_k,\ \ \ \ k \in\,\mathbb{Z}^2_0.\]
For any $r \in\,\mathbb{R}$, we denote by $(-A)^r$ the $r$-th fractional power of $-A$, defined on its domain $D((-A)^r)$. It is well known that $D((-A)^r)$ is the closure of the space spanned by $\{e_k\}_{k \in\,\mathbb{Z}^2_0}$ with respect to the norm in  $[H^{2r}(D)]^2$ and the mapping
\[u \in\,D((-A)^r)\mapsto |(-A)^r u|_H \in\,[0,+\infty),\]
defines a norm on $D((-A)^r)$, equivalent to the usual norm in $[H^{2r}(D)]^2$. Moreover, we have that the Leray-Helmholtz projection $P$ maps $[H^{2r}(D)]^2$ into $D((-A)^r)$, for every $r \in\,\mathbb{R}$. 

Due to the incompressibility condition, the nonlinearity in equation \eqref{eq1} can be rewritten as
\[(u\cdot \nabla)v=\text{div}\,(u\otimes v),\]
where
\[u\otimes v=\le(\begin{matrix}
u_1 v_1  &  u_1 v_2\\
u_2 v_1  &  u_2 v_2
\end{matrix}\r).\]
In what follows, we shall set
\begin{equation}
\label{ca147}
b(u,v)=-P\text{div}\,(u\otimes v),\ \ \ \ b(u)=-P\text{div}\,(u\otimes u).\end{equation}

We recall here that, whenever the quantities on the left-hand sides make sense, it holds
\begin{equation}
\label{identities}
\le<b(u),u\r>_H=0,\ \ \ \ \le<b(u),Au\r>_H=0,
\end{equation}
(for a proof see e.g. \cite{Temam_1983}).

Finally, concerning the noisy perturbation $\xi^\d(t,x)$ in equation \eqref{eq1}, it is a  Wiener process  on $[L^2(D)]^2$ and has zero average.
In what follows, we shall set
\[w^\d(t):=P\xi^\d(t),\ \ \ t\geq 0.\]
$w^\d(t)$ is now a  Wiener process on $H$, and we assume it can be written as
\[w^\d(t,x)=\sum_{k \in\,\mathbb{Z}^2_0} \lambda_k(\d) e_k(x) \beta_k(t),\ \ \ \ t\geq 0,\ \ \ x \in\,D,\]
where $\{e_k\}_{k \in\,\mathbb{Z}^2_0}$ is the  orthonormal basis that diagonalizes the operator $A$,  $\{\beta_k(t)\}_{k \in\,\mathbb{Z}^2_0}$ is a sequence of independent Brownian motions defined on the stochastic basic $(\Omega,\mathcal{F},\{\mathcal{F}_t\}_{t\geq 0},\mathbb{P})$, and for any $\d>0$
\[\lambda_k(\d)=\le(1+\d\,|k|^{2\gamma}\r)^{-\frac 12},\ \ \ \ k \in\,\mathbb{Z}^2_0,\]
for some fixed $\gamma>0$. In other words, $w^\d$ is a Wiener process on $H$ with covariance $Q_\d=(I+\d (-A)^{\gamma})^{-1}$.
 We would like to stress that our result easily generalize to more general covariance operators.
 
As we mentioned above,  in the present paper we are interested in the asymptotic behavior of equation \eqref{eq1}, as both $\e$ and $\d$ go to zero. In particular, we shall assume that $\d$ is a function of $\e$, such that 
\[\lim_{\e\to 0}\d(\e)=0.\] In what follows we shall denote by $Q_\e$ the bounded linear operator in $H$ defined by
\[Q_\e e_k=\la_k(\d(\e))\,e_k,\ \ \ \ k \in\,\nat.\]

Now, if we project equation \eqref{eq1} on $H$, with the notations we have just introduced, it can be rewritten as
\begin{equation}
\label{eq1-abstract}
du(t)=\le[A u(t)+b(u(t))\r]\,dt+\sqrt{\e}\,dw^{\d(\e)}(t),\ \ \ \ t\geq 0,\ \ \ \ \ u(0)=u_0.
\end{equation}
As proven e.g.in  \cite{flandoli}, equation \eqref{eq1-abstract} admits a unique {\em generalized} solution $u_\e \in\,C([0,T];H)$. This means that $u_\e$ is a progressively measurable process taking values in $C([0,T];H)$,  such that $\mathbb{P}$-a.s. equation \eqref{eq1-abstract} is satisfied in the integral form
\[\le<u_\e(t),\varphi\r>_H=\le<u_0,\varphi\r>_H+\int_0^t\le<u_\e(s),A\varphi\r>_H\,ds+\int_0^t\le<b(u_\e(s),\varphi),u_\e(s)\r>_H\,ds+\sqrt{\e}\le<w^{\d(\e)}(t),\varphi\r>_H,\]
for every $t \in\,[0,T]$ and $\varphi \in\,D(A)$.

In what follows, for every $\a\geq 0$ and $\e>0$, we consider the auxiliary Ornstein-Uhlenbeck  problem
\begin{equation}
\label{linear}
dz(t)=(A-\a)z(t)\,dt+\sqrt{\e}\,dw^{\,\d(\e)}(t),\ \ \ \ t\geq 0,
\end{equation}
whose unique stationary solution is given by
\begin{equation}
\label{stoconv}
z_{\e}^\a(t)=\sqrt{\e}\int_{-\infty}^t e^{(t-s)(A-\a)}\,d\bar{w}^{\,\d(\e)}(s),\ \ \ \ t \in\,\reals.
\end{equation}
Notice that here $\bar{w}^{\,\d(\e)}(t)$ is a two sided cylindrical Wiener process, defined by
\[\bar{w}^{\,\d(\e)}(t,x)=\sum_{k \in\,\mathbb{Z}^2_0}\la_k(\d(\e))e_k(x)\,\bar{\beta}_k(t),\ \ \ (t,x) \in\,\reals\times D,\]
where
\[\bar{\beta}_k(t)=\le\{
\begin{array}{ll}
\ds{\beta_k(t),}  &  \ds{\text{if}\ t\geq 0,}\\
&  \vs
\ds{\tilde{\beta}_k(-t),} &   \ds{\text{if}\ t<0,}
\end{array}\r.\]
for some sequence of independent Brownian motions $\{\tilde{\beta}_k(t)\}_{k \in\,\mathbb{Z}^2_0}$,  defined on the stochastic basis  $(\Omega,\mathcal{F},\{\mathcal{F}_t\}_{t\geq 0},\mathbb{P})$ and independent of the sequence $\{\beta_k(t)\}_{k \in\,\mathbb{Z}^2_0}$. 

It is well known that for any  fixed $\e>0$ the process $z^\a_\e$ belongs to $L^p(\Omega;C([0,T];D((-A)^{\beta}))$, for any $T>0$, $p\geq 1$ and $\beta<\gamma/2$. In the case $\a=0$, we shall set 
\begin{equation}
\label{ca90}
z_\e(t):=z^0_\e(t).
\end{equation}

\section{The problem and the method}
\label{sec3}

We are here interested in the study of the validity of a large deviation principle, as $\e\downarrow 0$, for   the family $\{\mathcal{L}(u_\e)\}_{\e \in\,(0,1)}$,  where $u_\e$ is the solution of the equation
\begin{equation}
\label{equation-abstract}
du(t)=\le[A u(t)+b(u(t))\r]\,dt+\sqrt{\e}\,dw^{\d(\e)}(t),\ \ \ \ t\geq 0,\ \ \ \ \ u(0)=u_0.
\end{equation}
Here and in what follows $T>0$ is fixed and $\e>0\mapsto \d(\e) >0$ is a function such that
\begin{equation}
\label{ca89}
\lim_{\e\to 0}\d(\e)=0.\end{equation}
We will prove that depending on the scaling we assume between $\e$ and $\d(\e)$, the family $\{\mathcal{L}(u_\e)\}_{\e \in\,(0,1)}$ satisfies a large deviation principle in $\mathcal{E}$,  where $\mathcal{E}$ is  a suitable space of trajectories on $[0,T]$,  taking values in some space of functions defined on the domain $D$ and containing $H$.

\begin{Theorem}
\label{teo1}
Let $\e\mapsto \d(\e)$ be a function satisfying \eqref{ca89}. Moreover, assume that there exists $\eta>0$ such that
\begin{equation}
\label{scaling1}
\lim_{\e\to 0} \e\,\d(\e)^{-\eta}=0.
\end{equation}
Then, for any $u_0 \in\,H$,  the family $\{\mathcal{L}(u_\e)\}_{\e>0}$ satisfies a large deviation principle in $C([0,T];H)$, with action functional
\begin{equation}
\label{action}
I_T(f)=\frac 12\int_0^T|f^\prime(t)-A f(t)-b(f(t))|_H^2\,dt.\end{equation}

\end{Theorem}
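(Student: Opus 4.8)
The plan is to use the weak convergence (variational) approach to large deviations for stochastic PDEs, as developed in \cite{BDM}. Write the solution of \eqref{equation-abstract} as $u_\e=\G^\e(W)$, where $W=\sum_k e_k\beta_k$ is the underlying cylindrical Wiener process on $H$ (so that $w^{\d(\e)}=Q_\e W$) and $\G^\e$ is the solution map of $du=(Au+b(u))\,dt+\sqrt\e\,Q_\e\,dW$. Since $C([0,T];H)$ is Polish, the Laplace principle — and hence the large deviation principle — follows once two sufficient conditions are verified. First, for every $N>0$ the map $v\in S_N\mapsto u^v\in C([0,T];H)$ must be continuous, where $S_N=\{v\in L^2(0,T;H):\int_0^T|v(t)|_H^2\,dt\leq N\}$ carries the weak topology and $u^v$ denotes the unique solution of the skeleton equation $\dot u^v=Au^v+b(u^v)+v$, $u^v(0)=u_0$. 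Second, if $\{v_\e\}$ is any family of predictable $S_N$-valued controls converging in distribution, as $S_N$-valued random elements with the weak topology, to a limit $v$, then the controlled processes $u_\e^{v_\e}$ solving $du_\e^{v_\e}=[Au_\e^{v_\e}+b(u_\e^{v_\e})]\,dt+\sqrt\e\,Q_\e\,dW+Q_\e v_\e\,dt$, $u_\e^{v_\e}(0)=u_0$, must converge in distribution to $u^v$ in $C([0,T];H)$. The rate function produced by this scheme is $\inf\{\frac12\int_0^T|v(t)|_H^2\,dt:\ f=u^v\}$, and since the skeleton equation forces $v=f'-Af-b(f)$ whenever $f$ is admissible, this is exactly the functional $I_T$ of \eqref{action}.

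\emph{Condition one (the skeleton equation).} I would establish well-posedness of the skeleton equation in $C([0,T];H)\cap L^2(0,T;D((-A)^{1/2}))$ by the classical two-dimensional Navier--Stokes energy method: pairing the equation with $u^v$ and using $\langle b(u),u\rangle_H=0$ from \eqref{identities} gives the energy balance, hence a priori bounds and uniqueness. For continuity, if $v_n\rightharpoonup v$ weakly in $L^2(0,T;H)$ with $v_n\in S_N$, these bounds are uniform in $n$; by the Aubin--Lions lemma $\{u^{v_n}\}$ is relatively compact in $L^2(0,T;H)$, one passes to the limit in the weak formulation (the quadratic term being controlled by the strong $L^2(0,T;H)$ convergence together with the uniform bound in $L^2(0,T;D((-A)^{1/2}))$), and uniqueness identifies the limit. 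As a by-product $\{u^v:v\in S_N\}$ is compact in $C([0,T];H)$, so $I_T$ has compact sublevel sets.

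\emph{Condition two (convergence of the controlled processes).} This is the heart of the argument. I would decompose $u_\e^{v_\e}=z_\e+\rho_\e$, where $z_\e=z_\e^0$ is the stochastic convolution \eqref{stoconv}--\eqref{ca90} (or, more conveniently, the stationary Ornstein--Uhlenbeck process $z_\e^\a$ of \eqref{linear} with a damping parameter $\a\geq0$), so that $\rho_\e$ solves the random equation $\dot\rho_\e=A\rho_\e+b(\rho_\e+z_\e)+Q_\e v_\e$, $\rho_\e(0)=u_0$ (up to the lower order term $\a z_\e^\a$). The role of the scaling hypothesis \eqref{scaling1} is to force $z_\e\to0$: a direct covariance computation gives $\E\sup_{t\in[0,T]}|(-A)^\beta z_\e(t)|_H^2\leq C_\beta\,\e\,\d(\e)^{-2\beta/\gamma}$ for every $\beta\in(0,\gamma/2)$, so choosing $\beta>0$ small enough that $2\beta/\gamma\leq\eta$ and using Gaussianity, $z_\e\to0$ in $L^p(\Omega;C([0,T];D((-A)^\beta)))$ for every $p\geq1$. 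On the event where $\|z_\e\|_{C([0,T];D((-A)^\beta))}$ is small, energy estimates for $\rho_\e$ — using $\langle b(\rho_\e),\rho_\e\rangle_H=0$, absorbing the cross terms $b(\rho_\e,z_\e)$, $b(z_\e,\rho_\e)$ and the term $b(z_\e)=-P\,\text{div}(z_\e\otimes z_\e)$ into the dissipation at the price of factors controlled by $\|z_\e\|_{C([0,T];D((-A)^\beta))}$, and using $\|Q_\e v_\e\|_{L^2(0,T;H)}\leq\|v_\e\|_{L^2(0,T;H)}\leq\sqrt N$ — yield bounds for $\rho_\e$ in $C([0,T];H)\cap L^2(0,T;D((-A)^{1/2}))$ that are uniform in $\e$ and hold with probability tending to one. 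Together with an equicontinuity estimate on $\dot\rho_\e$ in a space of negative order this makes $\{\rho_\e\}$ tight in $C([0,T];H)$ (the usual compactness argument for the two-dimensional Navier--Stokes equations, Aubin--Lions together with the energy equality); since $z_\e\to0$ and $S_N$ with the weak topology is compact metrizable, the triple $(\rho_\e,z_\e,Q_\e v_\e)$ is tight. By the Skorokhod representation theorem one may assume that, along a subsequence, this triple converges almost surely on an auxiliary probability space to $(\rho,0,v)$, with $\rho_\e\to\rho$ strongly in $L^2(0,T;H)$ and $Q_\e v_\e\rightharpoonup v$ weakly in $L^2(0,T;H)$ (here one uses that $Q_\e\to I$ strongly in $H$, combined with the weak convergence of the $v_\e$). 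Passing to the limit in the equation for $\rho_\e$ — the nonlinear term via the strong $L^2(0,T;H)$ convergence and $z_\e\to0$ in $L^2(0,T;D((-A)^\beta))$ — shows that $\rho$ solves the skeleton equation, hence $\rho=u^v$ by uniqueness, and therefore $u_\e^{v_\e}=z_\e+\rho_\e\to u^v$ in distribution; uniqueness of $u^v$ upgrades subsequential convergence to convergence of the whole family.

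\textbf{Main obstacle.} I expect the delicate step to be the uniform energy estimate for $\rho_\e$ when $\gamma$ is small, so that $z_\e$ has only slightly positive spatial regularity: the quadratic Navier--Stokes nonlinearity applied to $\rho_\e+z_\e$ then produces terms — most conspicuously $b(z_\e,z_\e)=-P\,\text{div}(z_\e\otimes z_\e)$, which need not even belong to $H$ — that cannot be absorbed into the dissipation of $\rho_\e$ by a crude estimate. Handling them will require a finer analysis: an additional splitting of $\rho_\e$ to isolate the contribution of $b(z_\e)$, together with product and paraproduct estimates in the Besov spaces $\mathcal{B}^\si_p(D)$ introduced in Section \ref{sec2}; and it is precisely the scaling \eqref{scaling1} that makes all these remainder terms, which carry positive powers of $\e\,\d(\e)^{-\eta}$, vanish as $\e\to0$. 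The same Besov-space machinery is what, when \eqref{scaling1} is relaxed to only \eqref{ca89}, should yield the weaker large deviation principle in $C([0,T];\mathcal{B}^\si_p(D))$ announced in the introduction.
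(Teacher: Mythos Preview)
Your variational framework via \cite{BDM} matches the paper exactly, and your skeleton-equation analysis is fine (the paper actually remarks that condition 1 is a special case of condition 2). The substantive difference is in how you handle the stochastic convolution. You propose to control $z_\e$ in $C([0,T];D((-A)^\beta))=C([0,T];H^{2\beta}(D))$ with $\beta$ small enough that $2\beta/\gamma\leq\eta$; but for small $\gamma$ this gives $2\beta<1/2$, and $H^{2\beta}$ does \emph{not} embed into $L^4$. Since the trilinear energy estimates for $b(\rho_\e,z_\e)$, $b(z_\e,\rho_\e)$ and $\langle b(z_\e),\rho_\e\rangle$ all require $|z_\e|_{L^4}$ (e.g.\ $|\langle b(z_\e),\rho_\e\rangle|=|\langle b(z_\e,\rho_\e),z_\e\rangle|\leq c|z_\e|_{L^4}^2|\rho_\e|_V$), your stated bound on $z_\e$ does not close the energy inequality. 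You sense this and flag it as your ``main obstacle,'' proposing paraproduct/Besov machinery as the cure---but that is what the paper reserves for Theorem \ref{teo2}, not Theorem \ref{teo1}.

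The paper's route is simpler. First, it estimates $|z^\a_\e(t)|_{L^p(D)}$ directly from the pointwise Gaussian law (no Sobolev embedding), obtaining $\E|z^\a_\e(t)|_{L^p}^p\leq c_p(\e\log(1/\d(\e)))^{p/2}$ and, more to the point, the pathwise bound \eqref{ca40}: $|z^\a_\e|_{C([0,T];L^p)}\leq c(\a\vee1)^{-\theta}K_\e(p,\beta)$ with $\E|K_\e(p,\beta_\eta)|^q\leq c(\e\,\d(\e)^{-\eta})^{c}$ under \eqref{scaling1}. Second---and this is the device you are missing---it takes the damping parameter $\a$ to be the \emph{random} constant $\a_\e=c(T)|K_\e(4,\beta_\eta)|^{\bar\kappa}\vee1$, chosen precisely so that $|z^{\a_\e}_\e|_{L^4(0,T;L^4)}\leq 1$ pathwise. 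This kills the Gronwall factor $\exp(c|z^\a_\e|_{L^4}^4)$ that would otherwise obstruct taking expectations, and it is this device (rather than any Besov analysis) that makes the energy argument go through in $C([0,T];H)$. The paper also separates the control contribution into $\Phi_\e(t)=\int_0^t e^{(t-s)A}Q_\e\varphi_\e(s)\,ds$ and proves $\Phi_\e\to\Gamma(\varphi)$ in $C([0,T];H^\rho)$, $\rho<1$, by a compactness argument; with these ingredients it shows directly that $\E|u_\e^{\varphi_\e}-u^\varphi|_{C([0,T];H)}^\kappa\to 0$, avoiding your tightness/Skorokhod step altogether. Your compactness approach could be made to work, but only after you replace the $H^{2\beta}$ control of $z_\e$ by the $L^4$ control above; the random-damping trick is what makes the direct quantitative route clean.
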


\begin{Theorem}
\label{teo2}
Let  $\e\mapsto \d(\e)$ be a function satisfying \eqref{ca89}. Moreover, let $\si<0$ and $p\geq 2$ be such that
\[
\si>-\frac 2p\vee \le(\frac 2p-1\r).
\]
Then, for any $u_0 \in\,H^\theta(D)$, with $\theta \geq \si+1-2/p$,  the family $\{\mathcal{L}(u_\e)\}_{\e>0}$ satisfies a large deviation principle in $C([0,T];\mathcal{B}^\si_{p}(D))$,  with the same action function $I_T$ introduced in \eqref{action}
 
  \end{Theorem}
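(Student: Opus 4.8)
The plan is to use the weak convergence (variational) approach to large deviations of \cite{BDM}. Writing the noise as $w^{\d(\e)}=Q_\e W$ for a cylindrical Wiener process $W$ on $H$, Girsanov's theorem leads, for controls $h\in L^2(0,T;H)$, to the controlled equation
\[
du_\e^h=\bigl(Au_\e^h+b(u_\e^h)+Q_\e h\bigr)\,dt+\sqrt\e\,dw^{\d(\e)},\qquad u_\e^h(0)=u_0,
\]
and, using that $Q_\e\to I$ strongly on $H$, to the skeleton equation $\dot u^h=Au^h+b(u^h)+h$, $u^h(0)=u_0$; the associated rate function is exactly $I_T$ as in \eqref{action}. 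By \cite{BDM} it then suffices to verify: (i) for every $N>0$ the level set $\{I_T\leq N\}$ is compact in $C([0,T];\mathcal{B}^\si_{p}(D))$; and (ii) whenever $(h_\e)$ are $\{\mathcal{F}_t\}$-adapted controls with $\frac12\int_0^T|h_\e(t)|_H^2\,dt\leq N$ a.s.\ and $h_\e\to h$ in law in $L^2(0,T;H)$ endowed with the weak topology, then $u_\e^{h_\e}\to u^h$ in law in $C([0,T];\mathcal{B}^\si_{p}(D))$.

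For (i) I would use the classical well-posedness theory of the two-dimensional Navier--Stokes equation with an $L^2$-in-time forcing: for $h\in L^2(0,T;H)$ and $u_0\in H^\theta(D)$ the skeleton equation has a unique solution in $C([0,T];H)\cap L^2(0,T;D((-A)^{1/2}))$, which by parabolic smoothing also lies in $C([0,T];H^\theta(D))$; since $\theta\geq\si+1-2/p$ (which is positive because $\si>2/p-1$) the embedding $H^\theta(D)\hookrightarrow\mathcal{B}^\si_{p}(D)$ holds, so $u^h$ takes values in the target space. Continuity of $h\mapsto u^h$ from the metrizable weakly compact ball $\{\frac12\int_0^T|h|_H^2\,dt\leq N\}$ into $C([0,T];\mathcal{B}^\si_{p}(D))$ follows from the energy estimates together with an Aubin--Lions compactness argument and uniqueness of the limit, and $\{I_T\leq N\}=\{u^h:\frac12\int_0^T|h|_H^2\,dt\leq N\}$ is then compact.

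Condition (ii) is the core. I would decompose $u_\e^{h_\e}=z_\e+Y_\e+v_\e$, where $z_\e(t)=\sqrt\e\int_0^te^{(t-r)A}\,dw^{\d(\e)}(r)$ is the linear stochastic term and $Y_\e(t)=\int_0^te^{(t-r)A}b(z_\e(r))\,dr$ the quadratic one (possibly after an $\alpha$-shift of the Ornstein--Uhlenbeck operator to gain coercivity), so that $v_\e=v_\e^{h_\e}$, with $v_\e(0)=u_0$, solves a forced Navier--Stokes equation whose forcing collects $Q_\e h_\e$ and finitely many lower-order bilinear terms in $v_\e,z_\e,Y_\e$. Three facts are needed, uniformly in $\e$ and in the admissible $h_\e$. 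First, $z_\e\to 0$ in $C([0,T];\mathcal{B}^\si_{p}(D))$ in probability: here $\si<0$ is decisive, since the stochastic convolution of the $\d(\e)$-regularized noise is bounded in $C([0,T];\mathcal{B}^\si_{p}(D))$ uniformly in $\d(\e)$ (Gaussian hypercontractivity, the Littlewood--Paley characterization of $\mathcal{B}^\si_{p}(D)$, and $\sum_q 2^{pq\si}<\infty$), and the prefactor $\sqrt\e$ does the rest; no relation between $\e$ and $\d(\e)$ is needed, in contrast with Theorem \ref{teo1}. Second, $Y_\e\to 0$ in $C([0,T];H^s(D))$ in probability for some $s\in(\si+1-2/p,1)$: although $b(z_\e)=-P\,\mathrm{div}(z_\e\otimes z_\e)$ and $z_\e\otimes z_\e$ diverges logarithmically in $\d(\e)$, its divergent part is a scalar multiple of the identity matrix and is annihilated by $P\,\mathrm{div}$, so that, combined with the time integration against the heat semigroup, a direct second-moment computation gives $\mathbb{E}\|Y_\e\|_{C([0,T];H^s(D))}^2\lesssim\e^2$ uniformly in $\d(\e)$ for every $s<1$ --- the divergence-free structure supplies the renormalization for free, a two-dimensional phenomenon already exploited in \cite{DPD02}. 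Third, a uniform a priori bound: $v_\e$ is bounded in $C([0,T];H^s(D))\cap L^2(0,T;H^{s+1}(D))$ uniformly in $\e$ and in $h_\e$ with $\frac12\int_0^T|h_\e|_H^2\,dt\leq N$, obtained from the energy estimate using the cancellations $\langle b(u),u\rangle_H=0$ and, since $z_\e$ is divergence free, $\langle b(z_\e,u),u\rangle_H=0$, together with paraproduct/Besov product bounds for the mixed terms $b(v_\e,z_\e)$, $b(v_\e,Y_\e)$, $b(z_\e,Y_\e)$, $b(Y_\e)$ and Gronwall's lemma; the admissible range of such product estimates with $\si<0$ is precisely what fixes the constraints $\si>-2/p$ and $p\geq 2$. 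Given these three facts, the laws of $(z_\e,Y_\e,v_\e,h_\e)$ are tight; via Skorokhod's representation one passes to the limit in the mild formulation of the $v_\e$-equation ($z_\e,Y_\e\to 0$, $Q_\e h_\e\rightharpoonup h$ weakly in $L^2(0,T;H)$, and strong convergence of $v_\e$ in $L^2(0,T;H)$ from Aubin--Lions plus the continuity of $b$), identifying the limit of $v_\e$ with $u^h$; uniqueness of $u^h$ promotes this to convergence of the whole family, and combining with $z_\e,Y_\e\to0$ and $H^s(D)\hookrightarrow\mathcal{B}^\si_{p}(D)$ yields $u_\e^{h_\e}\to u^h$ in $C([0,T];\mathcal{B}^\si_{p}(D))$, as required.

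I expect the main obstacle to be the second fact above: showing that the quadratic stochastic term $Y_\e$ is negligible in a space of positive regularity \emph{uniformly in $\d(\e)$}. This forces one to exploit the divergence-free structure of the equation to kill the logarithmically divergent renormalization constant and to track carefully the regularizing gain of the heat semigroup in the relevant Besov/Sobolev scale. The closure of the a priori estimates for $v_\e$ in the negative-regularity Besov setting, with the correct bookkeeping of the exponents $\si$, $p$ and $\theta$, is the other delicate point.
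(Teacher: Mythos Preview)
Your variational framework is correct and coincides with the paper's: both reduce the LDP to showing $u_\e^{\varphi_\e}\to u^\varphi$ in $C([0,T];\mathcal{B}^\si_p(D))$ for weakly convergent controls (Theorem~\ref{teo-bd}). Your treatment of $z_\e$ (Lemma~\ref{lemma-A1}) and of the quadratic term via the observation $\text{div}(z_\e\otimes z_\e)=\text{div}(z_\e\otimes z_\e-\e\vartheta_{\d(\e)}I)$ together with \eqref{renormalization} also matches the paper.

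The gap is in your third step: the energy estimate for $v_\e$ does \emph{not} close under the sole hypothesis \eqref{ca89}. After subtracting $z_\e$ and $Y_\e$, the equation for $v_\e$ still contains the cross term $b(v_\e,z_\e)$ (the cancellation kills $\langle b(z_\e,v_\e),v_\e\rangle_H$, not this one). Testing against $v_\e$ (or $(-A)^s v_\e$) gives $\langle b(v_\e,z_\e),v_\e\rangle_H=\langle v_\e\otimes z_\e,\nabla v_\e\rangle$, and any paraproduct bound on $v_\e\otimes z_\e$ that uses only $|z_\e|_{\mathcal{B}^\si_p(D)}$ with $\si<0$ places the product in a space of negative regularity which cannot be paired with $\nabla v_\e\in L^2$; closing in $H^s$ ultimately forces $z_\e$ into a positive-regularity space such as $L^4(0,T;L^4(D))$. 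That is exactly the estimate \eqref{ca10}, whose exponential prefactor $\exp(c|z_\e|_{L^4(L^4)}^4)$ is uniformly bounded only under the extra scaling \eqref{scaling1} --- i.e., in Theorem~\ref{teo1}, not here. If your energy argument worked, it would also yield the LDP in $C([0,T];H)$ without \eqref{scaling1}, collapsing the distinction between the two theorems.

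The paper's remedy is to abandon energy estimates entirely for the remainder and work in the mild formulation for $\rho_\e=v_\e-u^\varphi$ directly in the mixed space
\[
\mathcal{E}_t=C([0,t];\mathcal{B}^\si_p(D))\cap L^\beta(0,t;\mathcal{B}^\a_p(D)),\qquad \a>0>\si,
\]
with $\a,\beta$ chosen as in \eqref{ca103}. The bilinear bounds \eqref{ca101}--\eqref{ca102} (from \cite{DPD02}) are precisely designed to allow one factor in $\mathcal{B}^\si_p$ and the other in $\mathcal{B}^\a_p$, so the cross term $b(\rho_\e,z_\e)+b(z_\e,\rho_\e)$ is estimated by $c(t)\,|\rho_\e|_{\mathcal{E}_t}|z_\e|_{L^\rho(0,T;\mathcal{B}^\si_p)}$ with a constant $c(t)\to 0$ as $t\to 0$; a stopping time plus iteration over subintervals of fixed length $t_0$ then closes the estimate globally on $[0,T]$. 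The constraints $\si>-2/p\vee(2/p-1)$ are what make \eqref{ca103} solvable, and the hypothesis $u_0\in H^\theta(D)$ with $\theta\geq\si+1-2/p$ enters through the bound \eqref{ca96} on $u^\varphi$, needed for $I_{\e,2}$ and $I_{\e,5}$.
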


In order to prove Theorems \ref{teo1} and \ref{teo2}, we follow the weak convergence approach, as developed in \cite{BDM}. To this purpose, we need to introduce some notations.
We  denote by $\mathcal{P}_T$ the set of predictable processes in $L^2(\Omega\times[0,T];H)$, and for any $T>0$ and $\gamma>0$, we define the sets
\[\mathcal{S}^\gamma_T=\le\{f \in\,L^2(0,T;H)\ :\ \int_0^T |f(t)|_H^2\,dt\leq \gamma\r\},\]
and
\[\mathcal{A}^\gamma_T=\le\{ u \in\,\mathcal{P}_T\ :\ u \in\,\mathcal{S}^\gamma_T,\ \ \mathbb{P}-\text{a.s.} \r\}.
\]

Next, for any predictable process $\varphi(t)$ taking values in $L^2([0,T];H)$, we denote by $u^\varphi_\e(t)$ the generalized solution of the problem
\begin{equation}
\label{controlled}
du(t)=\le[A u(t)+b(u(t))+Q_\e\,\varphi(t)\r]\,dt+\sqrt{\e}\,dw^{\d(\e)}(t),\ \ \ \ t\geq 0,\ \ \ \ \ u(0)=u_0.
\end{equation}
Moreover, we denote by $u^\varphi$ the solution of the problem 
\begin{equation}
\label{det}
\frac{du}{dt}(t)=A u(t)+b(u(t))+\varphi(t)\,\ \ \ \ t\geq 0,\ \ \ \ \ u(0)=u_0.
\end{equation}

As for equation \eqref{eq1-abstract}, for any fixed $\e\geq 0$ and for any $T>0$ and $\kappa \geq 1$,  equation \eqref{controlled} admits a unique generalized solution $u^\varphi_\e$ in $L^\kappa(\Omega;C([0,T];H))$. As a particular case ($\e=0$), we have also well-posedness for equation \eqref{det}.

\medskip

By proceeding as in  \cite{BDM}, the following result can be proven.
\begin{Theorem}
\label{teo-bd}

Let  $\mathcal{E}$ be  a Polish space of trajectories defined on $[0,T]$ with values in a space of functions defined on the domain $D$ and containing the space $H$, and let $I_T$ be the functional defined in \eqref{action}.
Assume that
\begin{enumerate}
\item the level sets $\{ I_T(f)\leq r\}$ are compact in $\mathcal{E}$, for every $r\geq 0$;
\item 
for every  family $\{\varphi_\e\}_{\e>0}\subset {\mathcal A}^\gamma_T$ that converges in distribution, as $\e\downarrow 0$, to some $\varphi \in\,{\mathcal A}^\gamma_T$,  in the space $L^2(0,T;H)$, endowed with the weak topology, the family $\{u^{\varphi_\e}_\e\}_{\e>0}$ converges in distribution to $u^\varphi$, as $\e\downarrow 0$, in $\mathcal{E}$.
\end{enumerate}
Then the family $\{{\mathcal L}(u_\e)\}_{\e>0}$ satisfies a large deviation principle in $\mathcal{E}$, with action functional $I_T$.
\end{Theorem}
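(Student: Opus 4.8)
The plan is to follow the weak convergence approach of \cite{BDM}, so that this statement is essentially the abstract Budhiraja--Dupuis--Maroulas sufficient condition transcribed to our situation, and its proof rests on three standard ingredients. First I would record the variational representation. Since $w^{\d(\e)}=Q_\e W$ for a cylindrical Wiener process $W$ on $H$, and since \eqref{equation-abstract} and \eqref{controlled} are well posed in $L^\kappa(\Omega;C([0,T];H))$, the solution can be written as $u_\e=\mathcal{G}^\e(\sqrt\e\,W)$ for a Borel measurable map $\mathcal{G}^\e$ into $C([0,T];H)\subseteq\mathcal{E}$ (write $u_\e=z_\e+v_\e$, with $z_\e$ the stochastic convolution \eqref{ca90} and $v_\e$ solving a random PDE whose coefficients depend measurably on $z_\e$), and by Girsanov $u_\e^\varphi=\mathcal{G}^\e(\sqrt\e\,W+\int_0^\cdot\varphi(s)\,ds)$ for every $\varphi\in\mathcal{P}_T$. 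The Bou\'e--Dupuis formula then yields, for every bounded Borel $F:\mathcal{E}\to\R$,
\[
-\e\log\E\left[\exp\left(-\frac1\e\,F(u_\e)\right)\right]=\inf_{\varphi\in\mathcal{P}_T}\E\left[\frac12\int_0^T|\varphi(t)|_H^2\,dt+F(u_\e^\varphi)\right].
\]

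Second, since hypothesis 1 makes $I_T$ a good rate function on the Polish space $\mathcal{E}$, it is classical that the large deviation principle with rate function $I_T$ is equivalent to the corresponding Laplace principle; hence it suffices to prove the two Laplace bounds for every bounded continuous $F$. For the lower bound, fix such an $F$ and choose, for each $\e$, a control $\varphi_\e\in\mathcal{P}_T$ that is $\e$-optimal in the representation above. Boundedness of $F$ gives $\sup_\e\E\int_0^T|\varphi_\e|_H^2\,dt<\infty$, and a standard localization (stop $\varphi_\e$ at the first time $\int_0^t|\varphi_\e|_H^2\,ds$ reaches a level $\gamma$, and let $\gamma\uparrow\infty$ at the end) reduces matters to $\varphi_\e\in\mathcal{A}^\gamma_T$. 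Bounded sets of $L^2(0,T;H)$ are weakly compact and metrizable, so $\{\varphi_\e\}$ is tight in $\mathcal{S}^\gamma_T$ with the weak topology; along a subsequence $\varphi_\e\to\varphi$ in distribution with $\varphi\in\mathcal{A}^\gamma_T$, and by hypothesis 2, $u_\e^{\varphi_\e}\to u^\varphi$ in distribution in $\mathcal{E}$. Using Skorokhod's representation theorem (applied to the joint law of $(\varphi_\e,u_\e^{\varphi_\e})$), weak lower semicontinuity of $|\cdot|_{L^2(0,T;H)}$, Fatou's lemma, continuity and boundedness of $F$, and the identity $I_T(u^\psi)=\tfrac12\int_0^T|\psi|_H^2\,dt$ (valid for every $\psi\in L^2(0,T;H)$ by uniqueness for \eqref{det}, since then $\psi=(u^\psi)'-Au^\psi-b(u^\psi)$), one obtains
\[
\liminf_{\e\to0}\left(-\e\log\E\left[e^{-F(u_\e)/\e}\right]\right)\geq\E\left[\frac12\int_0^T|\varphi|_H^2\,dt+F(u^\varphi)\right]\geq\inf_{f\in\mathcal{E}}\left\{I_T(f)+F(f)\right\},
\]
and letting $\gamma\to\infty$ removes the truncation error. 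For the Laplace upper bound, fix a bounded continuous $F$ and $\eta>0$, and pick $f_0\in\mathcal{E}$ with $I_T(f_0)+F(f_0)\leq\inf_{f\in\mathcal{E}}\{I_T(f)+F(f)\}+\eta$; then $I_T(f_0)<\infty$, so $\psi_0:=f_0'-Af_0-b(f_0)\in L^2(0,T;H)$ is deterministic with $u^{\psi_0}=f_0$ by uniqueness for \eqref{det}. Plugging the deterministic control $\psi_0$ into the representation gives $-\e\log\E[e^{-F(u_\e)/\e}]\leq\tfrac12\int_0^T|\psi_0|_H^2\,dt+\E[F(u_\e^{\psi_0})]$, and applying hypothesis 2 to the constant sequence $\varphi_\e\equiv\psi_0$ yields $u_\e^{\psi_0}\to f_0$ in distribution in $\mathcal{E}$, hence $\E[F(u_\e^{\psi_0})]\to F(f_0)$; therefore $\limsup_{\e\to0}(-\e\log\E[e^{-F(u_\e)/\e}])\leq I_T(f_0)+F(f_0)\leq\inf_{f\in\mathcal{E}}\{I_T(f)+F(f)\}+\eta$, and $\eta>0$ is arbitrary.

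The only delicate points in this abstract step are the justification of the variational representation in the present infinite-dimensional additive-noise framework and the localization argument reducing arbitrary near-optimal controls to the weakly compact sets $\mathcal{A}^\gamma_T$ while keeping track of the vanishing errors; both are by now routine and are carried out in detail in \cite{BDM}. Accordingly, the substantive analytic work is entirely displaced onto the verification of hypothesis 2 in each scaling regime, which is the content of the proofs of Theorems \ref{teo1} and \ref{teo2}; the main obstacle there will be to pass to the limit in the nonlinear term $b(u_\e^{\varphi_\e})$ and to show that the noise term and the correction $(Q_\e-I)\varphi_\e$ become negligible in the topology of $\mathcal{E}$ under the assumed relation between $\e$ and $\d(\e)$.
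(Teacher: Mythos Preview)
Your proposal is correct and matches the paper's approach: the paper does not give an independent proof of this theorem but simply states that ``by proceeding as in \cite{BDM}'' the result follows, and then records the Laplace-principle formulation \eqref{laplace} together with the remark that compactness of the level sets upgrades the Laplace principle to the large deviation principle. Your sketch is precisely a faithful expansion of the Budhiraja--Dupuis--Maroulas argument the paper is citing, with the variational representation, the localization to $\mathcal{A}^\gamma_T$, and the two Laplace bounds spelled out; nothing further is needed here.
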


Actually, as shown in \cite{BDM}, the convergence of $u_\e^{\varphi_\e}$ to $u^\varphi$ implies the validity of the Laplace principle in $\mathcal{E}$ with rate functional $I_T$. This means that, for any continuous mapping $\Gamma:\mathcal{E}\to \reals$ it holds
\begin{equation}
\label{laplace}
\lim_{\e\to 0}-\e\log\E\,\exp\le(-\frac 1\e\,\Gamma(u_\e)\r)=\inf_{f \in\,\mathcal{E}}\le(\,\Gamma(f)+I_T(f)\,\r).\end{equation}
And, once one has shown that the level sets of $I_T$ are compact in $\mathcal{E}$, the validity of the Laplace principle as  in \eqref{laplace} is equivalent to say that the family $\{{\mathcal L}(u_\e)\}_{\e>0}$ satisfies a large deviation principle in $\mathcal{E}$, with action functional $I_T$.

The proof of condition 1 in Theorem \ref{teo-bd} is obtained once we show that, when  the space $L^2(0,T;H)$ is endowed with the topology of weak convergence, the mapping
\[\varphi \in\,L^2(0,T;H)\mapsto u^\varphi \in\,\mathcal{E},\]
is continuous. More precisely, condition 1 will follow if we can prove that for any sequence   $\{\varphi_n\}_{n \in\,\mathbb{N}}$   in $L^2(0,T;H)$, weakly convergent to some $\varphi \in\,L^2(0,T;H)$, it holds
 \[\lim_{n\to \infty}|u^{\varphi_n}-u^\varphi|_{\mathcal{E}}=0.\]

 As for condition 2, we will use Skorohod theorem and rephrase such a condition in the following way. Let $(\bar{\Omega}, \bar{\mathcal{F}},\bar{\mathbb{P}})$ be a probability space and let $\{\bar{w}^{\d(\e)}(t)\}_{t\geq 0}$ be a  Wiener process, with covariance $Q_\d$, defined on such a probability space and corresponding to the filtration $\{\bar{\mathcal{F}}_t\}_{t\geq 0}$. Moreover, let $\{\bar{\varphi}_\e\}_{\e>0}$ and $\bar{\varphi}$ be  $\{\bar{\mathcal{F}}_t\}_{t\geq 0}$-predictable processes taking values in $\mathcal{S}^\gamma_T$, $\bar{\mathbb{P}}$ almost surely, such that  the distribution of $(\bar{\varphi}_{\e}, \bar{\varphi}, \bar{w}^{\,\d(\e)})$ coincides with the distribution of $(\varphi_\e,\varphi,w^{\,\d(\e)})$ and
 \[\lim_{\e\to 0}\bar{\varphi}_\e=\bar{\varphi}\ \ \ \text{weakly in } L^2(0,T;H),\ \ \ \ \bar{\mathbb{P}}-\text{a.s.}\]
 Then, if $\bar{u}^{\,\bar{\varphi}_\e}_\e$ is the solution of an equation analogous to \eqref{controlled}, with $\varphi_\e$ and $w^{\,\d(\e)}$ replaced respectively by $\bar{\varphi}_\e$ and $\bar{w}^{\,\d(\e)}$, we have that 
 \begin{equation}
 \label{ca155}
 \lim_{\e\to 0} \bar{u}^{\,\bar{\varphi}_\e}_\e=\bar{u}^{\,\bar{\varphi}}\ \ \ \text{in }\mathcal{E},\ \ \ 
\overline{\mathbb{P}}-\text{a.s.}\end{equation}

We would like to stress that condition 1 in Theorem \ref{teo-bd} follows from condition 2. Actually, if we take in equation \eqref{controlled} $\sqrt{\e}=0$ and  $\{\varphi_\e\}_{\e>0}=\{\varphi_n\}_{n \in\,\nat}$ and $\varphi$  deterministic, then condition 1 is a particular case of condition 2.

 \section{Proof of Theorem \ref{teo1}}
\label{sec4}

In what follows, $\{\varphi_\e\}_{\e \in\,(0,1)}$ and $\varphi$ are predictable processes in $\mathcal{A}^\gamma_T$, for some $\gamma>0$ fixed, such that $\varphi_\e$ converges to $\varphi$, $\mathbb{P}$ almost surely, in the weak topology of $L^2(0,T;H)$.

For any $\a\geq 0$ and $\e>0$, we introduce the random equation
\begin{equation}
\label{random}
\frac{dv}{dt}(t)=A v(t)+b(v(t)+z^\a_\e(t)+\Phi_\e(t))+\a\,z^\a_\e(t),\ \ \ \ v(0)=u_0-z^\a_\e(0),
\end{equation}
where
$z^\a_\e$  is the process introduced in \eqref{stoconv}, solution of the linear equation \eqref{linear}, and
\[\Phi_\e(t)=\int_0^te^{(t-s)A}Q_\e\,\varphi_\e(s)\,ds,\ \ \ t\geq 0,\]
is the solution of the problem 
\[\frac{d\Phi_\e}{dt}(t)=A\Phi_\e(t)+Q_\e\,\varphi_\e(t),\ \ \ \ \Phi_\e(0)=0.\]
Notice that if $\varphi_\e \in\,\mathcal{A}^\gamma_T$, for some $\gamma>0$, then
\[\le|\Phi_\e(t)\r|_{L^p(D)}\leq c\int_0^t(t-s)^{-\frac{p-2}{2p}}|\varphi_\e(s)|_H\,ds,\]
so that
\[\begin{array}{l}
\ds{\le|\Phi_\e\r|^p_{L^p(0,T;L^p(D))}\leq c\int_0^T\le(\int_0^t(t-s)^{-\frac{p-2}{2p}}|\varphi_\e(s)|_H\,ds\r)^p\,dt}\\
\vs
\ds{\leq c_T\,|\varphi_\e|_{L^2(0,T;H)}^p \le(\int_0^T s^{-\frac{p-2}{p+2}}\,ds\r)^{\frac{p+2}2}.}
\end{array}\]

This implies that
\begin{equation}
\label{ca15}
\ds{\le|\Phi_\e\r|_{L^p(0,T;L^p(D))}\leq c_{T,p}\,\sqrt{\gamma},\ \ \ \mathbb{P}-\text{a.s}.}
\end{equation}

As shown e.g. in \cite{flandoli}, equation \eqref{random} admits a unique solution 
\begin{equation}
\label{regularity}
v^\a_\e \in\,C([0,T];H)\cap L^2(0,T;V),
\end{equation}
and the unique generalized solution $u_\e^\a$ of equation 
\begin{equation}
\label{controlled-eps}
du(t)=\le[A u(t)+b(u(t))+Q_\e\,\varphi_\e(t)\r]\,dt+\sqrt{\e}\,dw^{\d(\e)}(t),\ \ \ \ t\geq 0,\ \ \ \ \ u(0)=u_0,
\end{equation}
can be decomposed as 
\[u^\a_\e(t)=v^\a_\e(t)+z^\a_\e(t)+\Phi_\e(t),\ \ \ \ t \in\,[0,T].\]

\begin{Lemma}
Assume that $\{\varphi_\e\}_{\e>0}\subset \mathcal{A}^\gamma_T$, for some fixed $\gamma>0$. Then, there exists $c_{T,\gamma}>0$ such that for every $\e>0$ and $t \in\,[0,T]$ 
\begin{equation}
\label{ca10}
\begin{array}{l}
\ds{
|v_\e^\a(t)|_H^2+\int_0^t|v^\a_\e(s)|_V^2\,ds}\\
\vs
\ds{\leq c_{T,\gamma}\exp\le(c\,|z_\e^\a|_{L^4(0,t;L^4(D))}^4\r)\le(|u_0|_H^2+|z^\a_\e(0)|_H^2+(\a^2+1)\,|z_\e^\a|_{L^4(0,t;L^4(D))}^4+1\r).}
\end{array}
\end{equation}

Moreover, we have
\begin{equation}
\label{ca32}
\begin{array}{l}
\ds{|v^\a_\e|^4_{L^4(0,T;L^4(D))}}\\
\vs
\ds{\leq c_{T,\gamma}\exp\le(c\,|z_\e^\a|_{L^4(0,t;L^4(D))}^4\r)\le(|u_0|_H^2+|z^\a_\e(0)|_H^2+(\a^2+1)\,|z_\e^\a|_{L^4(0,t;L^4(D))}^4+1\r)^2.}
\end{array}
\end{equation}

\end{Lemma}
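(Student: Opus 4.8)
The plan is to derive the two estimates from an energy identity for the random equation \eqref{random}, exploiting the crucial algebraic identities \eqref{identities} to kill the worst trilinear terms. First I would test \eqref{random} with $v^\a_\e(t)$ in $H$; using $\langle Av,v\rangle_H = -|v|_V^2$ and expanding $b(v+z+\Phi)$ by bilinearity into $b(v)+b(v,z+\Phi)+b(z+\Phi,v)+b(z+\Phi,z+\Phi)$. The term $\langle b(v),v\rangle_H$ vanishes by the first identity in \eqref{identities}, and $\langle b(z+\Phi,v),v\rangle_H$ vanishes as well (the second of the ``$\langle b(u),u\rangle=0$''-type cancellations, with the convection field $z+\Phi$). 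The remaining terms $\langle b(v,z+\Phi),v\rangle_H$ and $\langle b(z+\Phi,z+\Phi),v\rangle_H$ together with $\a\langle z^\a_\e,v\rangle_H$ must be bounded. I would estimate $|\langle b(v,\psi),v\rangle_H| \le c\,|v|_{L^4}|\psi|_{L^4}|\nabla v|_{L^2}$ where $\psi = z^\a_\e+\Phi_\e$, then interpolate $|v|_{L^4}^2 \le c\,|v|_H |v|_V$ (Ladyzhenskaya's inequality in 2D), and absorb the $|v|_V$ factors into $\tfrac12|v|_V^2$ via Young's inequality, paying a price of $c\,|\psi|_{L^4}^4\,|v|_H^2$. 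Likewise $|\langle b(\psi,\psi),v\rangle_H| \le c\,|\psi|_{L^4}^2|v|_V$ is absorbed at the cost of $c\,|\psi|_{L^4}^4$, and $\a|\langle z,v\rangle_H|$ gives $c\,\a^2|z^\a_\e|_H^2 + \tfrac14 |v|_V^2$ (or can be folded into the $|\psi|_{L^4}^4$ term after further interpolation, which is cleaner here since $|z^\a_\e(t)|_H$ is controlled in $L^4(0,t;L^4)$ on the bounded domain). This yields a differential inequality
\[
\frac{d}{dt}|v^\a_\e(t)|_H^2 + |v^\a_\e(t)|_V^2 \le c\,\big(1+|\psi(t)|_{L^4(D)}^4 + \a^2|z^\a_\e(t)|_H^2\big)\,\big(1+|v^\a_\e(t)|_H^2\big).
\]

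Next I would apply Gronwall's lemma to this inequality. Writing $g(t) = c(1+|\psi(t)|_{L^4}^4 + \a^2|z^\a_\e(t)|_H^2)$, the integrating factor is $\exp(\int_0^t g(s)\,ds)$, and since $\int_0^t g \le c\,t + c\,|\psi|_{L^4(0,t;L^4)}^4 + c\,\a^2\int_0^t|z^\a_\e|_H^2$, I would use $|\psi|_{L^4(0,t;L^4)}^4 \le c(|z^\a_\e|_{L^4(0,t;L^4)}^4 + |\Phi_\e|_{L^4(0,t;L^4)}^4)$ and invoke the a priori bound \eqref{ca15} (with $p=4$) to replace the $\Phi_\e$ contribution by a constant $c_{T,\gamma}$, which gets absorbed into the prefactor $c_{T,\gamma}$ and the $\exp(\cdots)$ is bounded by $c_{T,\gamma}\exp(c|z^\a_\e|_{L^4(0,t;L^4)}^4)$ (the $\a^2\int|z^\a_\e|_H^2$ term can be bounded by $|z^\a_\e|^4_{L^4}+ c_T\a^2$-type terms via Hölder on the bounded domain, or simply kept and noted to be dominated by the right side; cleanest is to observe $\a^2 \int_0^t |z^\a_\e|_H^2 \le \a^2 t^{1/2}|z^\a_\e|^2_{L^4(0,t;H)} \le c_T \a^2 |z^\a_\e|^2_{L^4(0,t;L^4)}$ on $D=[0,2\pi]^2$, which also sits inside the stated right-hand side once one notes $\a^2 x^2 \le \a^4 + x^4$). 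Integrating the differential inequality then gives both $\sup_{[0,t]}|v^\a_\e|_H^2$ and $\int_0^t|v^\a_\e|_V^2$ bounded by the right-hand side of \eqref{ca10}, since $u_0 - z^\a_\e(0)$ is the initial datum and $|v^\a_\e(0)|_H^2 \le 2|u_0|_H^2 + 2|z^\a_\e(0)|_H^2$.

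For \eqref{ca32}, the plan is to interpolate in space-time: by Ladyzhenskaya, $|v^\a_\e(t)|_{L^4(D)}^4 \le c\,|v^\a_\e(t)|_H^2\,|v^\a_\e(t)|_V^2$, hence integrating in $t$,
\[
|v^\a_\e|_{L^4(0,T;L^4(D))}^4 \le c\,\Big(\sup_{t\in[0,T]}|v^\a_\e(t)|_H^2\Big)\int_0^T|v^\a_\e(t)|_V^2\,dt,
\]
and both factors on the right are bounded by \eqref{ca10}, whose product is exactly the square appearing in \eqref{ca32}. The main obstacle, or rather the point requiring care, is the bookkeeping of the constants: making sure that the $\a$-dependent terms coming from $\a\,z^\a_\e$ and from the cross terms genuinely fit inside the stated factor $(|u_0|_H^2 + |z^\a_\e(0)|_H^2 + (\a^2+1)|z^\a_\e|_{L^4(0,t;L^4)}^4 + 1)$ and that the $\Phi_\e$-terms are uniformly controlled through \eqref{ca15} independently of $\e$ (this is where the hypothesis $\varphi_\e \in \mathcal A^\gamma_T$, i.e. the $L^2(0,T;H)$-bound $\sqrt\gamma$, and the fact that $\|Q_\e\|_{\mathcal L(H)}\le 1$ enter). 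The trilinear estimates themselves are standard 2D Navier-Stokes bounds and the identities \eqref{identities} are exactly what is needed for the argument to close.
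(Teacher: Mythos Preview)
Your approach is essentially the same as the paper's: energy identity for \eqref{random}, the trilinear cancellations $\langle b(v),v\rangle_H=0$ and $\langle b(\psi,v),v\rangle_H=0$, Ladyzhenskaya/interpolation bounds on the three surviving terms, Gronwall, and then $|v|_{L^4(D)}^4\le c\,|v|_H^2|v|_V^2$ for \eqref{ca32}. The individual estimates you write for $\langle b(v,\psi),v\rangle_H$, $\langle b(\psi),v\rangle_H$ and $\a\langle z^\a_\e,v\rangle_H$ are all correct and coincide with the paper's.

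There is, however, one bookkeeping slip that is not merely cosmetic. Your bound on $\a\langle z^\a_\e,v^\a_\e\rangle_H$ correctly produces a \emph{source} term $c\,\a^2|z^\a_\e|_H^2$ (no $|v^\a_\e|_H^2$ factor), but when you assemble the differential inequality as $g(t)\,(1+|v^\a_\e|_H^2)$ with $g(t)=c(1+|\psi|_{L^4}^4+\a^2|z^\a_\e|_H^2)$ you force $\a^2|z^\a_\e|_H^2$ into the Gronwall \emph{coefficient}. Your subsequent step $\a^2x^2\le \a^4+x^4$ then leaves a factor $\exp(c\,\a^4)$ in the bound, which is absent from \eqref{ca10}. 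This matters downstream: in \eqref{ca44}--\eqref{ca60} one applies \eqref{ca10}--\eqref{ca32} with $\a=\a_\e$ random and potentially large, and an $\exp(c\,\a_\e^4)$ factor would ruin \eqref{ca43}. The fix is simply not to over-estimate when combining the three bounds: keep the inequality in the form
\[
\frac{d}{dt}|v^\a_\e|_H^2 + |v^\a_\e|_V^2 \;\le\; c\,|\psi(t)|_{L^4(D)}^4\,|v^\a_\e|_H^2 \;+\; c\,|\psi(t)|_{L^4(D)}^4 \;+\; c\,\a^2|z^\a_\e(t)|_{H}^2,
\]
so that only $\int_0^t|\psi|_{L^4}^4$ (hence, after \eqref{ca15}, only $|z^\a_\e|_{L^4(0,t;L^4)}^4$) enters the exponential, while the $\a^2$-term remains in the polynomial factor, exactly as stated in \eqref{ca10}. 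This is precisely what the paper does; the only minor difference is that the paper estimates $\a|\langle z,v\rangle|\le \a|z|_{H^{-1}}|v|_V$ and thus writes $c\,\a^2|z^\a_\e|_{H^{-1}}^2$ in place of your $c\,\a^2|z^\a_\e|_H^2$, but either choice works here. Your derivation of \eqref{ca32} from \eqref{ca10} is correct and identical to the paper's.
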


\begin{proof}
Let $v^\a_\e$ be the solution of problem \eqref{random}, having the regularity specified in  \eqref{regularity}. Due to the first identity in \eqref{identities}, we have
\[\begin{array}{l}
\ds{\frac 12\frac d{dt}|v^\a_\e(t)|_H^2+|v^\a_\e(t)|_V^2}\\
\vs
\ds{=\le<b(z^\a_\e(t)+\Phi_\e(t)),v^\a_\e(t)\r>_H+\le<b(v^\a_\e(t),z^\a_\e(t)+\Phi_\e(t)),v^\a_\e(t)\r>_H+\a\le<z^\a_\e(t),v^\a_\e(t)\r>_H.}
\end{array}\]

For every $\eta>0$, we have
\[\begin{array}{l}
\ds{\le|\le<b(z^\a_\e(t)+\Phi_\e(t)),v^\a_\e(t)\r>_H\r|=\le|\le<b(z^\a_\e(t)+\Phi_\e(t),v^\a_\e(t)),z^\a_\e(t)+\Phi_\e(t)\r>_H\r|}\\
\vs
\ds{\leq |v^\a_\e(t)|_V\,|z^\a_\e(t)+\Phi_\e(t)|_{L^4(D)}^2\leq \eta\,|v^\a_\e(t)|_V^2+c_\eta \le(|z^\a_\e(t)|_{L^4(D)}^4+|\Phi_\e(t)|_{L^4(D)}^4\r).}
\end{array}\]
As $H^{1/2}(D)\hookrightarrow L^4(D)$, by interpolation, we have
\[\begin{array}{l}
\ds{\le|\le<b(v^\a_\e(t),z^\a_\e(t)+\Phi_\e(t)),v^\a_\e(t)\r>_H\r|=\le|\le<b(v^\a_\e(t)),z^\a_\e(t)+\Phi_\e(t) \r>_H\r|}\\
\vs
\ds{\leq c|v^\a_\e(t)|_V\,|v^\a_\e(t)|_{H^{1/2}}|z^\a_\e(t)+\Phi_\e(t)|_{L^4(D)}\leq c|v^\a_\e(t)|^{3/2}_V\,|v^\a_\e(t)|_{H}^{1/2}|z^\a_\e(t)+\Phi_\e(t)|_{L^4(D)}}\\
\vs
\ds{\leq \eta\,|v^\a_\e(t)|_V^2+c_\eta\,|v^\a_\e(t)|_H^2\le(|z^\a_\e(t)|_{L^4(D)}^4+|\Phi_\e(t)|_{L^4(D)}^4\r).}
\end{array}\]
Moreover, we have 
\[\a|\le<z^\a_\e(t),v^\a_\e(t)\r>_H|\leq \eta\,|v^\a_\e(t)|^2_V+c_\eta\, \a^2\,|z^\a_\e(t)|_{H^{-1}}^2.\]
Therefore, if we pick $\eta=1/6$,  we get
\[\begin{array}{l}
\ds{\frac d{dt}|v^\a_\e(t)|_H^2+|v^\a_\e(t)|_V^2}\\
\vs
\ds{\leq c\,|v^\a_\e(t)|_H^2\le(|z^\a_\e(t)|_{L^4(D)}^4+|\Phi_\e(t)|_{L^4(D)}^4\r)+c\,(\a^2+1)\,|z^\a_\e(t)|_{L^4(D)}^4+c\,|\Phi_\e(t)|_{L^4(D)}^4.}
\end{array}\]
Due to \eqref{ca15}, by using the Gronwall lemma this yields \eqref{ca10}.

In order to prove \eqref{ca32}, we notice that, as $H^{1/2}(D)\hookrightarrow L^4(D)$, by interpolation we have
\[\begin{array}{l}
\ds{|v^\a_\e|^4_{L^4(0,T;L^4(D))}\leq c\int_0^T |v^\a_\e(s)|_V^2\,|v^\a_\e(s)|_H^2\,ds \leq |v^\a_\e|^2_{L^\infty(0,T;H)}|v^\a_\e|^2_{L^2(0,T;V)}.}
\end{array}\]
Therefore, \eqref{ca32} follows immediately from \eqref{ca10}.

\end{proof}

\begin{Remark}
{\em  \begin{enumerate}

\item  Due to \eqref{ca40}, there exist $\bar{\kappa}\geq 1$ and $c(T)>0$ such that for any $\e>0$
\begin{equation}
\label{ca44}
\a_\e:=c(T)\,|K_\e(4,\beta_\eta)|^{\bar{\kappa}}\vee 1\Longrightarrow |z^{\a_\e}_{\e}|_{L^4(0,T;L^4(D))}\leq 1 \mbox{ and } |z^{\a_\e}_{\e}(0)|_{H}\leq 1.
\end{equation}
Thanks to  \eqref{ca32}, this implies that
\begin{equation}
\label{ca60}
|v^{\a_\e}_\e|_{L^4(0,T;L^4(D))}\leq c_{T,\gamma}\le(|u_0|_H+\a_\e^2+1\r),\ \ \ \ \mathbb{P}-\text{a.s.}\end{equation}
and in view of \eqref{sa2}, we can conclude that if \eqref{scaling1} holds, then 
\begin{equation}
\label{ca43}
\E\,|v^{\a_\e}_\e|_{L^4(0,T;L^4(D))}^\kappa\leq c_{\gamma}(T,\kappa)\le(|u_0|_H^\kappa+ 1\r),\ \ \ \ \kappa\geq 1.
\end{equation}

\item As a consequence of \eqref{ca10}, if $\varphi \in\,\mathcal{A}_T^\gamma$ and $v^\varphi$ is a solution to the problem
\[
\frac{dv}{dt}(t)=A v(t)+b(v(t)+\Gamma(\varphi)(t)),\ \ \ \ v(0)=u_0,
\]
where 
\[\Gamma(\varphi)(t):=\int_0^te^{(t-s)A}\varphi(s)\,ds,\]
we have
\begin{equation}
\label{det-est}
|v^\varphi(t)|_H^2+\int_0^t|v^\varphi(s)|_H^2\,ds\leq c_{T,\gamma}\le(1+|u_0|_H^2\r).
\end{equation}

Moreover,  by interpolation, 
\begin{equation}
\label{det-est-l4}
|v^\varphi|_{L^4(0,T;L^4(D))}\leq c_{T,\gamma}(1+|u_0|_H).
\end{equation}

\end{enumerate}}
\end{Remark}

In the next lemma we investigate the continuity properties of the operator $\Gamma$ and we prove the convergence of $\Phi_\e$ to 
$\Gamma(\varphi)$ in case the sequence $\{\varphi_\e\}_{\e>0}$ is weakly convergent to $\varphi$.

\begin{Lemma}
For every $\rho <1$ there exists $\theta_\rho>0$ such that
\begin{equation}
\label{ca20}
|\Gamma(\varphi)|_{C^{\theta_\rho}([0,T];H^\rho(D))}\leq c_\rho\,|\varphi|_{L^2(0,T;H)},
\end{equation}
for every $\varphi \in\,L^2(0,T;H)$.
In particular, if $\{\varphi_\e\}_{\e>0}$  is a family in $ \mathcal{A}^\gamma_T$, weakly convergent in $L^2(0,T;H)$ to some $\varphi \in\, \mathcal{A}^\gamma_T$,
for every $\rho< 1$ we have
\begin{equation}
\label{ca30}
\lim_{\e\to 0}|\Phi_\e-\Gamma(\varphi)|_{C([0,T];H^\rho(D))}=0.
\end{equation}
\end{Lemma}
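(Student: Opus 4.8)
The plan is to first establish the deterministic regularity estimate \eqref{ca20} for the map $\Gamma$ by standard maximal-regularity / analytic semigroup bounds, and then to deduce \eqref{ca30} by combining a uniform (in $\e$) bound in the Hölder space $C^{\theta_\rho}([0,T];H^\rho(D))$ with an identification of the weak limit, using compactness of the embedding $C^{\theta_\rho}([0,T];H^\rho(D))\hookrightarrow C([0,T];H^{\rho'}(D))$ for $\rho'<\rho$. For the first part, I would write, for $0\le s\le t\le T$,
\[
\Gamma(\varphi)(t)-\Gamma(\varphi)(s)=\int_s^t e^{(t-r)A}\varphi(r)\,dr+\int_0^s\bigl(e^{(t-r)A}-e^{(s-r)A}\bigr)\varphi(r)\,dr,
\]
and estimate the $H^\rho$-norm of each term using $|(-A)^{\rho/2}e^{\tau A}|_{\mathcal L(H)}\le c\,\tau^{-\rho/2}$ and $|(-A)^{\rho/2}(e^{\tau A}-I)e^{\sigma A}|_{\mathcal L(H)}\le c\,\tau^{\theta}\sigma^{-(\rho/2+\theta)}$ for suitable small $\theta$, together with the Cauchy–Schwarz inequality in $r$; choosing $\theta$ small enough that the resulting time integrals $\int_0^T r^{-(\rho+2\theta)}\,dr$ converge (which needs $\rho+2\theta<1$, hence the restriction $\rho<1$) gives \eqref{ca20} with $\theta_\rho$ any number less than $(1-\rho)/2$.

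For the second part, I would apply \eqref{ca20} with $\varphi$ replaced by $Q_\e\varphi_\e$; since $Q_\e$ is a contraction on $H$ and $\varphi_\e\in\mathcal A^\gamma_T$, we get $\sup_\e|\Phi_\e|_{C^{\theta_\rho}([0,T];H^\rho(D))}\le c_\rho\sqrt\gamma$, $\P$-a.s. Fix $\rho<1$ and pick $\rho<\rho''<1$; the uniform bound holds in $C^{\theta_{\rho''}}([0,T];H^{\rho''}(D))$, and by Arzelà–Ascoli (compact embedding $H^{\rho''}(D)\hookrightarrow H^\rho(D)$ plus equicontinuity in time) every sequence $\e_n\to0$ has a subsequence along which $\Phi_{\e_n}$ converges in $C([0,T];H^\rho(D))$. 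It remains to identify the limit as $\Gamma(\varphi)$: I would test against a fixed $\psi\in H$ and write $\langle\Phi_\e(t)-\Gamma(\varphi)(t),\psi\rangle_H=\int_0^t\langle\varphi_\e(r),Q_\e e^{(t-r)A}\psi-e^{(t-r)A}\psi\rangle_H\,dr+\int_0^t\langle\varphi_\e(r)-\varphi(r),e^{(t-r)A}\psi\rangle_H\,dr$. The first integral is bounded by $|\varphi_\e|_{L^2(0,T;H)}\,\bigl(\int_0^T|(Q_\e-I)e^{(t-r)A}\psi|_H^2\,dr\bigr)^{1/2}$, which tends to $0$ by dominated convergence since $Q_\e\to I$ strongly on $H$ and $\lambda_k(\d(\e))\uparrow1$; the second tends to $0$ because $r\mapsto e^{(t-r)A}\psi$ is a fixed element of $L^2(0,T;H)$ and $\varphi_\e\rightharpoonup\varphi$ weakly. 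Hence every subsequential limit is $\Gamma(\varphi)$, so the whole family converges, giving \eqref{ca30}.

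The main obstacle I anticipate is purely bookkeeping: getting the fractional-power semigroup estimates with the right exponents so that the time integrals converge exactly under $\rho<1$, and making sure the Hölder exponent $\theta_\rho$ is chosen uniformly in $\e$ (which it is, since $Q_\e$ is a uniform contraction). The passage to the limit itself is soft — a combination of a uniform a-priori bound, compactness, and weak convergence — and requires only the strong convergence $Q_\e\to I$ on $H$, which follows from $\lim_{\e\to0}\d(\e)=0$.
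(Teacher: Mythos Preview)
Your proposal is correct, and for \eqref{ca30} it follows essentially the same line as the paper: a uniform bound in $C^{\theta_\rho}([0,T];H^{\rho''}(D))$, compactness of the embedding into $C([0,T];H^\rho(D))$, and identification of the limit. The paper organizes the limit identification slightly differently, writing
\[
\Phi_\e-\Gamma(\varphi)=\Gamma\bigl(Q_\e(\varphi_\e-\varphi)\bigr)+\Gamma\bigl((Q_\e-I)\varphi\bigr),
\]
so that the first term is handled by compactness plus weak convergence $Q_\e(\varphi_\e-\varphi)\rightharpoonup 0$, while the second uses the \emph{strong} convergence $(Q_\e-I)\varphi\to 0$ in $L^2(0,T;H)$ together with the continuity of $\Gamma$ from \eqref{ca20}. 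Your alternative, absorbing both effects into the weak-testing step $\langle \Phi_\e(t)-\Gamma(\varphi)(t),\psi\rangle_H\to 0$, is equally valid.

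For \eqref{ca20} the routes genuinely differ. The paper uses the factorization formula
\[
\Gamma(\varphi)(t)=c_\beta\int_0^t(t-s)^{\beta-1}e^{(t-s)A}Y_\beta(\varphi)(s)\,ds,\qquad Y_\beta(\varphi)(s)=\int_0^s(s-\sigma)^{-\beta}e^{(s-\sigma)A}\varphi(\sigma)\,d\sigma,
\]
bounds $Y_\beta(\varphi)$ in $L^p(0,T;H)$ by Young's inequality, and then invokes the known continuity of the outer integral from $L^p(0,T;H)$ into $C^{\beta-\rho/2-1/p}([0,T];H^\rho(D))$. Your direct splitting $\Gamma(\varphi)(t)-\Gamma(\varphi)(s)$ and use of the analytic smoothing bounds is more elementary and self-contained, at the modest cost of having to track the exponents by hand; the factorization route has the advantage of packaging the H\"older regularity into a single quotable lemma. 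Both give the same range $\theta_\rho<(1-\rho)/2$.
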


\begin{proof}
For every $\beta \in\,(0,1)$, we have
\[\begin{array}{l}
\ds{\Gamma(\varphi)(t)=c_\beta\int_0^t (t-s)^{-\beta+1}e^{(t-s)A}Y_\beta(\varphi)(s)\,ds,}
\end{array}\]
where
\[Y_\beta(\varphi)(s)=\int_0^s(s-\si)^{-\beta} e^{(s-\si)A} \varphi(\si)\,d\si.\]
Due to the Young inequality, we get
\[\begin{array}{l}
\ds{|Y_\beta(\varphi)|_{L^p(0,T;H)}^p=\int_0^T\le(\int_0^s (s-\si)^{-\beta}|\varphi(\si)|_H\,d\si\r)^p\,ds\leq |\varphi|_{L^2(0,T;H)}^p\le(\int_0^T s^{-\frac{2\beta p}{p+2}}\,ds\r)^{\frac{p+2}p},}
\end{array}\]
and hence, if $\beta<1/2+1/p$, we have
\[|Y_\beta(\varphi)|_{L^p(0,T;H)}\leq c_p(T)\,|\varphi|_{L^2(0,T;H)}.\]

Now, as shown e.g. in \cite{dpz-erg}, if $\beta>\rho/2+1/p$ we have that the mapping
\[Y \in\,L^p(0,T;H)\mapsto \int_0^t (t-s)^{-\beta+1}e^{(t-s)A} Y(s)\,ds \in\,C^{\beta-\frac \rho 2-\frac 1p}([0,T];H^\rho(D)),\]
is continuous. Therefore, we can conclude that
\[|\Gamma(\varphi)|_{C^{\beta-\frac \rho 2-\frac 1p}([0,T];H^\rho(D))}\leq c_{\rho,\beta}(T)\,|\varphi|_{L^2(0,T;H)},\]
if
$\rho/2+1/p<\beta<1/2+1/p$, and this implies \eqref{ca20}.

Now, in order to prove \eqref{ca30}, we notice that 
\[\Phi_\e-\Gamma(\varphi)=\Gamma(Q_\e(\varphi_\e-\varphi))+\Gamma(Q_\e\varphi-\varphi).\]
Since $Q_\e(\varphi_\e-\varphi) \in\,\mathcal{A}^\gamma_T$ and $Q_\e(\varphi_\e-\varphi)\rightharpoonup 0$, as $\e\downarrow 0$,  weakly  in $L^2(0,T;H)$,  due to the compactness of the immersion of $C^{\theta_{\rho_1}}([0,T];H^{\rho_1}(D))$ into $C([0,T];H^{\rho_2}(D))$, for every $\rho_1>\rho_2$, from  \eqref{ca20} we conclude  that 
\begin{equation}
\label{ca31}
\lim_{\e\to 0}|\Gamma(Q_\e(\varphi_\e-\varphi))|_{C([0,T];H^\rho(D))}=0,\end{equation}
for every $\rho<1$.
Moreover, thanks again to \eqref{ca20}, 
 \[\begin{array}{l}
\ds{|\Gamma(Q_\e\,\varphi-\varphi)|_{C([0,T];H^\rho(D))}^p\leq c_\rho|Q_\e\,\varphi-\varphi|_{L^2(0,T;H)}\to 0,\ \ \ \ \text{as}\ \e\to 0,}
\end{array}\]
and together with \eqref{ca31}, this implies \eqref{ca30}.
\end{proof}

In what follows, we shall denote
\[\rho_\e^\a(t):=v^\a_\e(t)-v^\varphi(t),\ \ \ \ t\geq 0.\]
 It is immediate to check that $\rho_\e^\a$ is a solution to the problem
\begin{equation}
\frac{d\rho^\a_\e}{dt}(t)=A \rho^\a_\e(t)+b(v^\a_\e(t)+z^\a_\e(t)+\Phi_\e(t))-b(v^\varphi(t)+\Gamma(\varphi)(t))+\a\,z^\a_\e(t),\ \ \ \ \rho^\a_\e(0)=-z^\a_\e(0).
\end{equation}

\begin{Lemma}
If $\{\varphi_\e\}_{\e>0}\subset \mathcal{A}^\gamma_T$ and $\varphi \in\,\mathcal{A}_T^\gamma$, for every $\a\geq 0$ we have
\begin{equation}
\label{ca35}
\begin{array}{l}
\ds{\sup_{t \in\,[0,T]}|\rho^\a_\e(t)|_H^2+\int_0^T|\rho^\a_\e(t)|_V^2\,dt\leq c_\gamma(T)\,\exp\le(u_0|^4_{H}+1\r)}\\
 \vs
 \ds{\le(|z^\a_\e(0)|_H^2+|z^\a_\e|_{L^4(0,T;L^4(D))}^2\le(\,|v^\a_\e|_{L^4(0,T;L^4(D))}^2+1+\a^2\r)+|z^\a_\e|_{L^4(0,T;L^4(D))}^4\r.}\\
 \vs
 \ds{\le.+|\Phi_\e-\Gamma(\varphi)|_{L^4(0,T;L^4(D))}^2\le(1+|u_0|_{H}^2+|v^\a_\e|_{L^4(0,T;L^4(D))}^2\r)\r).}
 \end{array}
\end{equation}
\end{Lemma}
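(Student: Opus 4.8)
The plan is to derive an energy estimate for $\rho^\a_\e$ by testing its equation with $\rho^\a_\e$ in $H$, exactly in the spirit of the first lemma in the excerpt, but now keeping track of the difference of the two nonlinear terms rather than a single nonlinearity. First I would write, using the first identity in \eqref{identities} (so that $\le<b(\rho^\a_\e),\rho^\a_\e\r>_H=0$ once the arguments are split appropriately),
\[
\frac 12\frac d{dt}|\rho^\a_\e(t)|_H^2+|\rho^\a_\e(t)|_V^2=\le<b(v^\a_\e+z^\a_\e+\Phi_\e)-b(v^\varphi+\Gamma(\varphi)),\rho^\a_\e\r>_H+\a\le<z^\a_\e,\rho^\a_\e\r>_H,
\]
and then expand the difference of the nonlinearities bilinearly. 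Writing $b(u)-b(w)=b(u-w,u)+b(w,u-w)$ and decomposing $u-w=\rho^\a_\e+(z^\a_\e+\Phi_\e-\Gamma(\varphi))$, the terms split into: a genuinely quadratic-in-$\rho^\a_\e$ piece, which after using $\le<b(\rho^\a_\e,\rho^\a_\e),\rho^\a_\e\r>_H=0$ leaves cross terms $\le<b(\rho^\a_\e,\zeta),\rho^\a_\e\r>_H$ and $\le<b(\zeta,\rho^\a_\e),\rho^\a_\e\r>_H$ with $\zeta$ either one of $v^\a_\e$, $v^\varphi$, $z^\a_\e$, $\Phi_\e$, $\Gamma(\varphi)$, or the difference $\Phi_\e-\Gamma(\varphi)$; a piece linear in $\rho^\a_\e$ coming from the difference $\Phi_\e-\Gamma(\varphi)$ multiplied against the "drift" functions; and the Ornstein--Uhlenbeck remainder $\a\le<z^\a_\e,\rho^\a_\e\r>_H$.

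Next I would estimate each term with the same tools used earlier: $H^{1/2}(D)\hookrightarrow L^4(D)$, interpolation $|v|_{H^{1/2}}\leq c|v|_V^{1/2}|v|_H^{1/2}$, the bound $|\le<b(f,g),h\r>_H|\leq |h|_V|f|_{L^4}|g|_{L^4}$ (and its cyclic variants), and Young's inequality to absorb all the $|\rho^\a_\e|_V^2$ contributions into the left-hand side with a small constant $\eta$ (say $\eta=1/N$ for $N$ the number of such terms). The terms of the form $\le<b(\rho^\a_\e,\zeta),\rho^\a_\e\r>_H$ with $\zeta$ among the smooth/known data produce, after interpolation, a contribution bounded by $\eta|\rho^\a_\e|_V^2+c_\eta|\rho^\a_\e|_H^2|\zeta|_{L^4(D)}^4$; the linear-in-$\rho^\a_\e$ terms involving $\psi_\e:=\Phi_\e-\Gamma(\varphi)$ yield $\eta|\rho^\a_\e|_V^2+c_\eta|\psi_\e|_{L^4(D)}^4(1+|v^\a_\e|_{L^4(D)}^4+|u_0|_H^2)$-type bounds after using the $H$-bounds \eqref{det-est} for $v^\varphi$; and $\a\le<z^\a_\e,\rho^\a_\e\r>_H\leq \eta|\rho^\a_\e|_V^2+c_\eta\a^2|z^\a_\e|_{H^{-1}}^2\leq \eta|\rho^\a_\e|_V^2+c_\eta\a^2|z^\a_\e|_{L^4(D)}^2$. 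Collecting everything, the differential inequality reads
\[
\frac d{dt}|\rho^\a_\e(t)|_H^2+|\rho^\a_\e(t)|_V^2\leq c\,|\rho^\a_\e(t)|_H^2\,g(t)+c\,h(t),
\]
where $g(t)$ is a sum of $L^4$-norms to the fourth power of $v^\a_\e$, $v^\varphi$, $z^\a_\e$, $\Phi_\e$, $\Gamma(\varphi)$ (all of which have $L^1(0,T)$-in-time bounds by \eqref{ca32}, \eqref{det-est-l4}, \eqref{ca15}, \eqref{ca20} and the regularity of the stochastic convolution), and $h(t)$ is the remainder collecting the $z^\a_\e$-data terms and the $\psi_\e$ terms. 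Applying Gronwall's lemma, using $\rho^\a_\e(0)=-z^\a_\e(0)$, and bounding $\int_0^T g(s)\,ds$ by $c_\gamma(T)(|u_0|_H^4+1)$ (which is where the exponential factor $\exp(|u_0|_H^4+1)$ in the statement comes from, via \eqref{ca10}/\eqref{ca32} and \eqref{det-est-l4}), and similarly organizing $\int_0^T h(s)\,ds$ via Hölder into the products of $L^4(0,T;L^4(D))$ norms displayed in \eqref{ca35}, gives precisely the claimed bound; the bound on $\int_0^T|\rho^\a_\e|_V^2\,dt$ comes from integrating the differential inequality once the $L^\infty$-in-time bound on $|\rho^\a_\e|_H$ is in hand.

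The main obstacle will be the careful bookkeeping of the cross terms: because one subtracts two nonlinearities evaluated at sums of several functions, one gets a sizable collection of trilinear terms, and one must make sure that in every term at least one factor is $\rho^\a_\e$ paired so that the $V$-norm of $\rho^\a_\e$ can be extracted and absorbed, while the remaining factors are exactly those whose norms appear on the right-hand side of \eqref{ca35}. In particular the terms that are only linear in $\rho^\a_\e$ (those proportional to $\psi_\e$ with no other $\rho^\a_\e$ factor) must be handled so that they contribute $|\psi_\e|_{L^4(0,T;L^4(D))}^2$ times $(1+|u_0|_H^2+|v^\a_\e|_{L^4(0,T;L^4(D))}^2)$ and nothing worse — this forces using $|\le<b(f,g),h\r>_H|\leq|h|_V|f|_{L^4}|g|_{L^4}$ with $h=\rho^\a_\e$ even when $\rho^\a_\e$ appears only once, paying a $\eta|\rho^\a_\e|_V^2$ that is absorbed and a $c_\eta|\psi_\e|_{L^4}^2(\cdots)^2$ that is kept. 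Apart from this combinatorial care, every individual estimate is a routine application of the inequalities already invoked in the proof of \eqref{ca10}.
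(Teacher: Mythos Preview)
Your overall strategy---energy identity for $\rho^\a_\e$, trilinear estimates with $H^{1/2}\hookrightarrow L^4$ and interpolation, then Gronwall---is exactly the paper's. The gap is in your bookkeeping of the Gronwall coefficient $g$. You write that $g(t)$ is a sum of $|v^\a_\e(t)|_{L^4}^4$, $|z^\a_\e(t)|_{L^4}^4$, etc., and then claim $\int_0^T g\leq c_\gamma(T)(|u_0|_H^4+1)$. That claim is false: neither $\int_0^T|v^\a_\e|_{L^4}^4\,dt$ nor $\int_0^T|z^\a_\e|_{L^4}^4\,dt$ is controlled by $|u_0|_H$ and $\gamma$ alone (look at \eqref{ca32}: the bound on $|v^\a_\e|_{L^4}^4$ already contains $\exp(c|z^\a_\e|^4_{L^4})$). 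With your $g$, the exponential factor after Gronwall would depend on $z^\a_\e$, and you would \emph{not} obtain the stated inequality \eqref{ca35}, whose whole point is that the exponential prefactor depends only on $|u_0|_H$.

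The fix is precisely the ``careful bookkeeping'' you warn about, but done in a specific way. The paper arranges the splitting so that the only quadratic-in-$\rho^\a_\e$ terms are $\langle b(\rho^\a_\e,v^\varphi),\rho^\a_\e\rangle_H$ and $\langle b(\rho^\a_\e,\Phi_\e),\rho^\a_\e\rangle_H$, i.e.\ the Gronwall coefficient contains only $|v^\varphi|_{L^4}^4+|\Phi_\e|_{L^4}^4$, both bounded by $c_\gamma(T)(|u_0|_H^4+1)$ via \eqref{det-est-l4} and \eqref{ca15}. In particular: (i) in the $\langle b(\rho^\a_\e,v^\a_\e),\rho^\a_\e\rangle_H$ term you must use $v^\a_\e=v^\varphi+\rho^\a_\e$ and $\langle b(\rho^\a_\e,\rho^\a_\e),\rho^\a_\e\rangle_H=0$ to replace $v^\a_\e$ by $v^\varphi$; (ii) the cross terms with $z^\a_\e$ must be kept in the form $\langle b(v^\a_\e,z^\a_\e)+b(z^\a_\e,v^\a_\e),\rho^\a_\e\rangle_H$ \emph{without} substituting $v^\a_\e=v^\varphi+\rho^\a_\e$, and estimated as linear-in-$\rho^\a_\e$ terms, yielding $\eta|\rho^\a_\e|_V^2+c_\eta|z^\a_\e|_{L^4}^2|v^\a_\e|_{L^4}^2$ in the \emph{remainder} $h$. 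This is why the right-hand side of \eqref{ca35} carries the product $|z^\a_\e|_{L^4}^2|v^\a_\e|_{L^4}^2$ rather than $|z^\a_\e|_{L^4}^4$ in the exponent. Once you route the terms this way, the rest of your argument goes through verbatim.
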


\begin{proof}
Taking into account of the first identity in \eqref{identities}, we have
\[\begin{array}{l}
\ds{\frac 12 \frac d{dt}|\rho^\a_\e(t)|_H^2+|\rho^\a_\e(t)|_V^2=\le<b(v^\a_\e(t))-b(v^\varphi(t)),\rho^\a_\e(t)\r>_H+\le<b(\Phi_\e(t))-b(\Gamma(\varphi)(t)),\rho^\a_\e(t)\r>_H}\\
\vs
\ds{+\le<b(z^\a_\e(t)),\rho^\a_\e(t)\r>_H+\le<b(v^\a_\e(t),z^\a_\e(t))+b(z^\a_\e(t),v^\a_\e(t)),\rho^\a_\e(t)\r>_H}\\
\vs
\ds{+\le<b(z^\a_\e(t),\Phi_\e(t))+b(\Phi_\e(t),z^\a_\e(t)),\rho^\a_\e(t)\r>_H+\le<b(\rho^\a_\e(t),\Phi_\e(t)),\rho^\a_\e(t)\r>_H}\\
\vs
\ds{+\le<b(v^\varphi(t),\Phi_\e(t)-\Gamma(\varphi)(t))+b(\Phi_\e(t)-\Gamma(\varphi)(t),v^\a_\e(t)),\rho^\a_\e(t)\r>_H+\a\,\le<z^\a_\e(t),\rho^\a_\e(t)\r>_H}\\
\vs
\ds{:=\sum_{j=1}^8 I^\a_{\e,j}(t).}
\end{array}\]
Now, we are going to estimate each one of the terms $I^\a_{\e,j}(t)$, for $j=1,\ldots,8$. We have
\[I^\a_{\e,1}(t)=\le<b(\rho^\a_\e(t),v^\varphi(t),\rho^\a_\e(t)\r>_H=-\le<b(\rho^\a_\e(t)),v^\varphi(t)\r>_H,\]
so that, by interpolation, for any $\eta>0$,
\begin{equation}
\label{ca-I_1}
|I^\a_{\e,1}(t)|\leq |\rho^\a_\e(t)|_V|\rho^\a_\e(t)|_{L^4(D)}|v^\varphi(t)|_{L^4(D)}\leq \eta\,|\rho^\a_\e(t)|_V^2+c_\eta\,|\rho^\a_\e(t)|_H^2\,|v^\varphi(t)|^4_{L^4(D)}.\end{equation}
For $I^\a_{\e,2}(t)$ we have
\[\begin{array}{l}
\ds{\le<b(\Phi_\e(t))-b(\Gamma(\varphi)(t)),\rho^\a_\e(t)\r>_H}\\
\vs
\ds{=\le<b(\Phi_\e(t),\Phi_\e(t)-\Gamma(\varphi)(t))+b(\Phi_\e(t)-\Gamma(\varphi)(t),\Gamma(\varphi)(t)),\rho^\a_\e(t)\r>_H,}
\end{array}\]
and, by proceeding as for $I^\a_{\e,1}(t)$, we have
\begin{equation}
\label{ca-I_2}
|I^\a_{\e,2}(t)|\leq \eta\,|\rho^\a_\e(t)|_V^2+c_{\eta}\le(|\Phi_\e(t)|_{L^4(D)}^2+|\Gamma(\varphi)(t)|_{L^4(D)}^2\r)|\Phi_\e(t)-\Gamma(\varphi)(t)|_{L^4(D)}^2.
\end{equation}
For $I^\a_{\e,3}(t)$, we have
\begin{equation}
\label{ca-I_3}
|I^\a_{\e,3}(t)|=\le|\le<b(z^\a_\e(t)),\rho^\a_\e(t)\r>_H\r|\leq \eta\,|\rho^\a_\e(t)|_V^2+c_\eta\,|z^\a_\e(t)|_{L^4(D)}^4,
\end{equation}
and, in an analogous way, 
\begin{equation}
\label{ca-I_4}
|I^\a_{\e,4}(t)|+|I^\a_{\e,5}(t)|\leq \eta\,|\rho^\a_\e(t)|_V^2+c_\eta\,|z^\a_\e(t)|_{L^4(D)}^2\le(\,|v^\a_\e(t)|_{L^4(D)}^2+|\Phi_\e(t)|_{L^4(D)}^2\r).
\end{equation}
Concerning $I^\a_{\e,6}(t)$, by interpolation we get
\begin{equation}
\label{ca-I_6}
|I^\a_{\e,6}(t)|\leq \eta\,|\rho^\a_\e(t)|_V^2+c_\eta\,|\Phi_\e(t)|_{L^4(D)}^4\,|\rho^\a_\e(t)|_H^2.
\end{equation}
Finally, with the same arguments used for $I^\a_{\e,3}$, and also for $I^\a_{\e,4}$ and $I^\a_{\e,5}$, we get
\begin{equation}
\label{ca-I_7}
I^\a_{\e,7}(t)|\leq \eta\,|\rho^\a_\e(t)|_V^2+c_\eta\le(|v^\varphi|_{L^4(D)}^2+|v^\a_\e(t)|_{L^4(D)}^2\r)|\Phi_\e(t)-\Gamma(\varphi)|_{L^4(D)}^2.
\end{equation}
For the last term, we have
\begin{equation}
|I^\a_{\e,8}(t)|\leq \eta\,|\rho^\a_\e(t)|_V^2+c_\eta\,\a^2\,|z^\a_\e(t)|^2_{H^{-1}}.
\end{equation}
Therefore, if we take $\eta=1/14$, we obtain
\[ \begin{array}{l}
\ds{\frac d{dt}|\rho^\a_\e(t)|_H^2+|\rho^\a_\e(t)|_V^2\leq c\,|\rho^\a_\e(t)|_H^2\le( |v^\varphi(t)|^4_{L^4(D)}+|\Phi_\e(t)|_{L^4(D)}^4\r)}\\
\vs
\ds{+\le(|\Phi_\e(t)|_{L^4(D)}^2+|\Gamma(\varphi)(t)|_{L^4(D)}^2+|v^\varphi(t)|_{L^4(D)}^2+|v^\a_\e(t)|_{L^4(D)}^2\r)|\Phi_\e(t)-\Gamma(\varphi)(t)|_{L^4(D)}^2}\\
\vs
\ds{+c\,|z^\a_\e(t)|_{L^4(D)}^2\le(\,|v^\a_\e(t)|_{L^4(D)}^2+|\Phi_\e(t)|_{L^4(D)}^2+\a^2\r)+c\,|z^\a_\e(t)|_{L^4(D)}^4.}
\end{array}\]
 Recalling that
 \[\varphi \in\,\mathcal{A}^\gamma_T\Longrightarrow |\Gamma(\varphi)|_{L^p(0,T;L^p(D))}\leq c_p(T)\, \gamma,\ \ \ \ \mathbb{P}-\text{a.s.}\]
as a consequence of the Gronwall lemma,  this implies that
 \[\begin{array}{l}
 \ds{\sup_{t \in\,[0,T]}|\rho^\a_\e(t)|_H^2+\int_0^T|\rho^\a_\e(t)|_V^2\,dt\leq c_\gamma(T)\,\exp\le(|v^\varphi|^4_{L^4(0,T;L^4(D))}\r)}\\
 \vs
 \ds{\le(|z^\a_\e(0)|_H^2+|z^\a_\e|_{L^4(0,T;L^4(D))}^2\le(\,|v^\a_\e|_{L^4(0,T;L^4(D))}^2+1+\a^2\r)+|z^\a_\e|_{L^4(0,T;L^4(D))}^4\r.}\\
 \vs
 \ds{\le.+|\Phi_\e-\Gamma(\varphi)|_{L^4(0,T;L^4(D))}^2\le(1+|v^\varphi|_{L^4(0,T;L^4(D))}^2+|v^\a_\e|_{L^4(0,T;L^4(D))}^2\r)\r).}
 \end{array}\]
Thanks to \eqref{det-est-l4}, we conclude that \eqref{ca35} holds.

\end{proof}

\subsection{Conclusion of the proof of Theorem \ref{teo1}}

We have already seen that, if $\a$ is any given non-negative constant and $v^\a_\e(t)$ is the solution to problem \eqref{random},  then it holds
\[u_\e^{\varphi_\e}(t)=v^\a_\e(t)+z^\a_\e(t)+\Phi_\e(t),\ \ \ \ t\geq 0.\]
Since $u^\varphi(t)=v^\varphi(t)+\Gamma(\varphi)(t)$, this implies that we can write
\[u_\e^{\varphi_\e}(t)-u^\varphi(t)=\le[v^{\a_\e}_\e(t)-v^\varphi(t)\r]+z^{\a_\e}_\e(t)+\le[\Phi_\e(t)-\Gamma(\varphi)(t)\r],\ \ \ \ t\geq 0,\]
where   $\a_\e$ is the random constant  defined in \eqref{ca44}.

Due to  \eqref{ca60} and \eqref{ca35}, it is immediate to check that
\[\begin{array}{l}
\ds{|v^{\a_\e}_\e(t)-v^\varphi(t)|_H^2\leq c_\gamma(T,|u_0|_H)\le[|z^{\a_\e}_\e(0)|_H^2+|z^{\a_\e}_\e|_{L^4(0,T;L^4(D))}^4\r.}\\
\vs
\ds{\le.+\le(|z^{\a_\e}_\e|_{L^4(0,T;L^4(D))}^2+|\Phi_\e-\Gamma(\varphi)|_{L^4(0,T;L^4(D))}\r)\le(1+\a_\e^2\r)\r].}
\end{array}\]
Now, in view of \eqref{ca40}, for any $\beta \in\,(0,1/4)$ there exists $c_\beta(T)$ such that for every $\a>0$
\[|z^\a_\e|_{C([0,T];L^4(D))}\leq c_\beta(T)\,K_\e(4,\beta),\ \ \ \ \mathbb{P}-\text{a.s.}\]
This implies that, if we fix any $\eta \in\,(0,1/2\gamma)$ satisfying \eqref{scaling1} and $\beta_\eta \in\,(0,1/4)$ so that \eqref{sa2} holds, we get
\begin{equation}
\label{mc1}
\begin{array}{l}
\ds{|v^{\a_\e}_\e(t)-v^\varphi(t)|_H^2\leq c_{\gamma,\eta}(T,|u_0|_H)\le(K_\e^4(4,\beta_\eta)+K^2_\e(2,\beta)+|\Phi_\e-\Gamma(\varphi)|_{L^4(0,T;L^4(D))}\r)\le(1+\a_\e^2\r).}
\end{array}\end{equation}
As a consequence of \eqref{sa2} and assumption \eqref{scaling1}, we have 
\[\sup_{\e \in\,(0,1)}\E\,\a_\e^\kappa<\infty,\ \ \ \ \kappa\geq 1.\]
Then, thanks again to \eqref{sa2}, from \eqref{mc1} we can conclude that for any $\kappa\geq 1$
\[\E\,|v^{\a_\e}_\e-v^\varphi|_{C([0,T];H)}^\kappa\leq c_{\gamma,\eta,\kappa}(T, |u_0|_H)\le[\le(\e\,\d(\e)^{-\eta}\r)^{c_\kappa}+\le(\E\,|\Phi_\e-\Gamma(\varphi)|^\kappa_{L^4(0,T;L^4(D))}\r)^{\frac 12}\r].\]
Because of \eqref{scaling1}, \eqref{ca20} and \eqref{ca30}, this implies that 
\begin{equation}
\label{ca80}
\lim_{\e \to 0}
\e\,\d(\e)^{-\eta}=0\Longrightarrow \lim_{\e\to 0}\E\,|v^{\a_\e}_\e-v^\varphi|_{C([0,T];H)}^\kappa=0,\ \ \ \ \kappa\geq 1.
\end{equation}

Since 
\[|u_\e^{\varphi_\e}-u^\varphi|_{C([0,T];H)}\leq |v^{\a_\e}_\e-v^\varphi|_{C([0,T];H)}+|z^{\a_\e}_\e|_{C([0,T];H)}+|\Phi_\e-\Gamma(\varphi)|_{C([0,T];H)},\]
\eqref{ca80},
together once more with   \eqref{ca20} and \eqref{ca30},  implies that 
\begin{equation}
\label{ca82}
\lim_{\e\to 0}
\e\,\d(\e)^{-\eta}=0\Longrightarrow\lim_{\e\to 0}|u_\e^{\varphi_\e}-u^\varphi|_{C([0,T];H)}^\kappa=0,\ \ \ \ \ \kappa\geq 1.
\end{equation}

In view of Theorem \ref{teo-bd} and all comments in Section \ref{sec3} after Theorem \ref{teo-bd}, we can conclude that Theorem \ref{teo1} is proved.

\section{Proof of Theorem \ref{teo2}}
\label{sec5}

In what follows, we  fix any $\si<0$ and $p\geq 2$ such that
\[\si>-\frac 2p\vee \le(\frac 2p-1\r).
\] Because of such a condition, we can fix two real constants $\a$ and $\beta$ such that 
\begin{equation}
\label{ca103}
\frac 2p>\a>-\si>0,\ \ \ p\geq 2,\ \ \ \beta\geq 2,\ \ \ -\frac 12+\frac 1p<\frac \a 2-\frac 1\beta<\frac \si 2.\end{equation}
Once fixed $\a$, $\si$, $p$ and $\beta$, for any $0\leq s<t$ we  denote
\[\mathcal{E}_{s,t}:=C([s,t];\mathcal{B}^\si_{p}(D))\cap L^\beta(s,t;\mathcal{B}^\a_{p}(D)).\]
$\mathcal{E}_{s,t}$ turns out to be a Banach space, endowed with the norm
\[|v|_{\mathcal{E}_{s,t}}:=\sup_{r \in\,[s,t]}|v(r)|_{\mathcal{B}^\si_{p}(D)}+|v|_{L^p(s,t;\mathcal{B}^\a_{p}(D)}.\]
In the case $s=0$, we shall set $\mathcal{E}_{0,t}=\mathcal{E}_{t}$.

Our purpose here is to show that under condition \eqref{ca89} the family $\{u_\e\}_{\e \in\,(0,1)}$ satisfies a large deviation principle in $C([0,T];\mathcal{B}^\si_p(D))$, with action functional $I_T$, as defined in \eqref{action}. In view of Theorem \ref{teo-bd} and the arguments in Section \ref{sec3}, this follows once we prove that for any sequence $\{\varphi_\e\}_{\e>0}\subset \mathcal{A}^\gamma_T$, $\mathbb{P}$-almost surely convergent to some $\varphi \in\,\mathcal{A}^\gamma_T$, with respect to the topology of weak covergence in $L^2(0,T;H)$, the sequence $\{u^{\varphi_\e}_\e\}_{\e>0}$ converges $\mathbb{P}$-almost surely to $u^\varphi$ in $C([0,T];\mathcal{B}^\si_p(D))$.

\medskip

For any  $\e>0$, we introduce the random equation
\begin{equation}
\label{randombis}
\frac{dv_\e}{dt}(t)=A v_\e(t)+b(v_\e(t)+z_\e(t))+Q_\e\,\varphi_\e,\ \ \ \ v_\e(0)=u_0-z_\e(0),
\end{equation}
where
$z_\e(t)=z_\e^0(t)$ is the process introduced in \eqref{ca90}. In particular, we have
\[u^{\varphi_\e}_\e(t)-u^\varphi(t)=\le[v_\e(t)-u^\varphi(t)\r]+z_\e(t)=:\rho_\e(t)+z_\e(t),\ \ \ \ t\geq 0,\]
Since\[
\frac{d\rho_\e}{dt}(t)=A \rho_\e(t)+b(v_\e(t)+z_\e(t))-b(u^\varphi(t))+Q_\e\, \varphi_\e(t)-\varphi(t),\ \ \ \ \rho_\e(0)=-z_\e(0),
\]
we have that $\rho_\e(t)$ solves the following integral equation
\[\begin{array}{l}
\ds{\rho_\e(t)=-e^{tA}z_\e(0)+\int_0^te^{(t-s)A}\le(b(v_\e(s))-b(u^\varphi(s))\r)\,ds+\int_0^t e^{(t-s)A}b(z_\e(s))\,ds}\\
\vs
\ds{+\int_0^t e^{(t-s)A}\le(b(\rho_\e(s),z_\e(s))+b(z_\e(s),\rho_\e(s))\r)\,ds}\\
\vs
\ds{+\int_0^t e^{(t-s)A}\le(b(u^\varphi(s),z_\e(s))+b(z_\e(s),u^\varphi(s))\r)\,ds+\le[\Phi_\e(t)-\Gamma(\varphi)(t)\r]=:\sum_{i=1}^6 I_{\e,i}(t).}
\end{array}\]

Our first goal here is  to estimate the norm of each term $I_{\e,i}$ in $\mathcal{E}_{t}$, for every $t\leq T$, and prove a uniform bound for $\rho_\e$ in $\mathcal{E}_T$. To this purpose, we first prove a suitable bound for $u^\varphi$ in $H^\theta(D)$, with $\theta  \in\,(0,1)$.

\begin{Lemma}
Assume that $u_0 \in\,H^\theta(D)$, for some $\theta \in\,[0,1)$. Then, for any $\varphi \in\,L^2(0,T;H)$ we have
\begin{equation}
\label{ca96}
\sup_{t \in\,[0,T]}|u^\varphi(t)|^2_{H^\theta(D)}+\int_0^T|u^\varphi(s)|^2_{H^{\theta+1}(D)}\,ds\leq c\le(|u_{0}|_{H^\theta(D)},|\varphi|_{L^2(0,T;H)}\r).
\end{equation}

\end{Lemma}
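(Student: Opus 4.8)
The plan is to prove \eqref{ca96} by a standard energy estimate for the deterministic controlled Navier--Stokes equation \eqref{det}, testing against $(-A)^\theta u^\varphi$. First I would recall that, by the basic estimate \eqref{det-est} (the case $z=0$, $\Phi=\Gamma(\varphi)$ with $\varphi\in L^2(0,T;H)$, or rather its analogue for \eqref{det} directly), we already have a bound for $u^\varphi$ in $C([0,T];H)\cap L^2(0,T;V)$ and hence, by interpolation as in \eqref{det-est-l4}, in $L^4(0,T;L^4(D))$, all in terms of $|u_0|_H\le c|u_0|_{H^\theta(D)}$ and $|\varphi|_{L^2(0,T;H)}$. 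This is the a priori input I will use to close the higher-regularity estimate.

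Next I would write, formally (justified by a Galerkin approximation since $u^\varphi$ is only known a priori to lie in $C([0,T];H)\cap L^2(0,T;V)$),
\[
\frac12\frac{d}{dt}|(-A)^{\theta/2}u^\varphi(t)|_H^2+|(-A)^{(\theta+1)/2}u^\varphi(t)|_H^2
=\langle b(u^\varphi(t)),(-A)^\theta u^\varphi(t)\rangle_H+\langle\varphi(t),(-A)^\theta u^\varphi(t)\rangle_H,
\]
where I use $\langle Au,(-A)^\theta u\rangle_H=-|(-A)^{(\theta+1)/2}u|_H^2$. The linear term is bounded by $\eta\,|(-A)^{(\theta+1)/2}u^\varphi|_H^2+c_\eta|\varphi|_H^2|(-A)^{(\theta-1)/2}\cdot|$-type factors; more simply $|\langle\varphi,(-A)^\theta u^\varphi\rangle_H|\le\eta|(-A)^{(\theta+1)/2}u^\varphi|_H^2+c_\eta|(-A)^{(\theta-1)/2}\varphi|_H^2\le\eta|(-A)^{(\theta+1)/2}u^\varphi|_H^2+c_\eta|\varphi|_H^2$, since $\theta<1$. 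For the nonlinear term I would use the identities \eqref{identities} and the standard estimate $|\langle b(u),(-A)^\theta u\rangle_H|\le c|u|_{L^4(D)}|u|_{W^{1,4}}|(-A)^{\theta/2}u|$ combined with interpolation/Sobolev embeddings valid in dimension two; the cleanest route is to bound it by $\eta|(-A)^{(\theta+1)/2}u^\varphi|_H^2+c_\eta|u^\varphi|_V^2\,|(-A)^{\theta/2}u^\varphi|_H^2$ (using that in 2D $|b(u)|_{H^{\theta-1}}\le c|u|_{H^1}|u|_{H^\theta}$ for $0\le\theta<1$, which holds because $H^1\cdot H^\theta\hookrightarrow H^{\theta-1+ \text{something}}$ by the product estimate and the 2D Sobolev exponents, then pairing with $(-A)^{(\theta+1)/2}$). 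Choosing $\eta$ small and absorbing, I obtain
\[
\frac{d}{dt}|(-A)^{\theta/2}u^\varphi(t)|_H^2+|(-A)^{(\theta+1)/2}u^\varphi(t)|_H^2\le c\,|u^\varphi(t)|_V^2\,|(-A)^{\theta/2}u^\varphi(t)|_H^2+c\,|\varphi(t)|_H^2.
\]

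Finally I would apply Gronwall's lemma: since $\int_0^T|u^\varphi(t)|_V^2\,dt<\infty$ is controlled by \eqref{det-est}, the prefactor $\exp(c\int_0^T|u^\varphi|_V^2\,dt)$ is finite and depends only on $|u_0|_{H^\theta(D)}$ and $|\varphi|_{L^2(0,T;H)}$, and integrating the differential inequality gives both the $\sup_t|(-A)^{\theta/2}u^\varphi(t)|_H^2$ bound and, after integrating in $t$, the $\int_0^T|(-A)^{(\theta+1)/2}u^\varphi(s)|_H^2\,ds$ bound, which by equivalence of norms is exactly \eqref{ca96}. The main obstacle I anticipate is getting the nonlinear pairing $\langle b(u^\varphi),(-A)^\theta u^\varphi\rangle_H$ controlled purely in terms of $|u^\varphi|_V^2$ times $|(-A)^{\theta/2}u^\varphi|_H^2$ (so that Gronwall closes with only the already-known $L^2(0,T;V)$ bound), which requires the right 2D product/interpolation estimate for $b$ and the restriction $\theta<1$; the rigorous justification via Galerkin truncation is routine but should be mentioned.
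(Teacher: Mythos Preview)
Your proposal is correct and follows essentially the same approach as the paper: first secure the basic $C([0,T];H)\cap L^2(0,T;V)$ bound, then test \eqref{det} against $(-A)^\theta u^\varphi$, arrive at the differential inequality
\[
\frac{d}{dt}|u^\varphi(t)|_{H^\theta}^2+|u^\varphi(t)|_{H^{\theta+1}}^2\le c\,|u^\varphi(t)|_V^2\,|u^\varphi(t)|_{H^\theta}^2+c\,|\varphi(t)|_H^2,
\]
and close with Gronwall using the already-known $L^2(0,T;V)$ bound. The only place where the paper is more concrete than your sketch is the nonlinear pairing: rather than invoking a product estimate of the form $|b(u)|_{H^{\theta-1}}\le c|u|_{H^1}|u|_{H^\theta}$ (which is borderline in 2D and, as you yourself indicate, not quite what the standard product rule gives), the paper applies H\"older directly to the trilinear form,
\[
|\langle b(u^\varphi),(-A)^\theta u^\varphi\rangle_H|\le |u^\varphi|_{L^{q_1}}|(-A)^\theta u^\varphi|_{L^{q_2}}|u^\varphi|_V,\qquad q_1=\tfrac{2}{1-\theta},\ q_2=\tfrac{2}{\theta},
\]
and then uses the critical 2D embeddings $H^\theta\hookrightarrow L^{q_1}$ and $H^{1-\theta}\hookrightarrow L^{q_2}$ to obtain $c\,|u^\varphi|_{H^\theta}|u^\varphi|_{H^{1+\theta}}|u^\varphi|_V$, after which Young's inequality gives exactly the term you wrote. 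This direct trilinear estimate is cleaner and makes transparent why $\theta<1$ is needed.
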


\begin{proof}
Since
\[\frac 12\frac d{dt}|u^\varphi(t)|^2_{H}+|u^\varphi(t)|^2_{V}=\langle \varphi(t),u^\varphi(t)\rangle_H,\]
we immediately have 
\begin{equation}
\label{ca95}
|u^\varphi(t)|^2_{H}+\int_0^t|u^\varphi(s)|^2_{V}\,ds\leq |u_0|_H^2+\frac {\la_1}2\int_0^t|\varphi(s)|_H^2\,ds.\end{equation}

For every $\theta\geq 0$, we have
\[\frac 12\frac d{dt}|u^\varphi(t)|^2_{H^\theta(D)}+|u^\varphi(t)|^2_{H^{\theta+1}(D)}=\langle b(u^\varphi(t)),(-A)^{\theta}u^\varphi(t)\rangle_H+\langle \varphi(t),(-A)^{\theta}u^\varphi(t)\rangle_H.\]
Now, if we assume $\theta<1$ and set $q_1=2/(1-\theta)$ and $q_2=2/\theta$, we have
\[\le|\langle b(u^\varphi(t)),(-A)^{\theta}u^\varphi(t)\rangle_H\r|\leq |u^\varphi(t)|_{L^{q_1}(D)}|(-A)^\theta u^\varphi(t)|_{L^{q_2}(D)}|u^\varphi(t)|_V.\]
As
\[W^{\theta,2}(D)\hookrightarrow L^{q_1}(D),\ \ \ \ W^{1-\theta,2}(D)\hookrightarrow L^{q_2}(D),\]
this implies that 
\[\begin{array}{l}
\ds{\le|\langle b(u^\varphi(t)),(-A)^{\theta}u^\varphi(t)\rangle_H\r|\leq |u^\varphi(t)|_{H^\theta(D)}|u^\varphi(t)|_{H^{1+\theta}(D)}|u^\varphi(t)|_{V}}\\
\vs
\ds{\leq \frac 14|u^\varphi(t)|_{H^{1+\theta}(D)}^2+c\,|u^\varphi(t)|_{H^\theta(D)}^2|u^\varphi(t)|_{V}^2.}
\end{array}\]
Therefore, as
\[\le|\langle \varphi(t),(-A)^{\theta}u^\varphi(t)\rangle_H\r|\leq |\varphi(t)|_H\,|u^\varphi(t)|_{H^{2\theta}(D)}\leq \frac 14|u^\varphi(t)|_{H^{1+\theta}(D)}^2+c\,|\varphi|_H^2,\]
we conclude that 
\[\frac d{dt}|u^\varphi(t)|^2_{H^\theta(D)}+|u^\varphi(t)|^2_{H^{\theta+1}(D)}\leq c\,|u^\varphi(t)|_{H^\theta(D)}^2|u^\varphi(t)|_{V}^2+c\,|\varphi|_H^2.\]
Thanks to \eqref{ca95}, this implies 
\[\begin{array}{l}
\ds{|u^\varphi(t)|^2_{H^\theta(D)}\leq \exp\le(c\int_0^T|u^\varphi(s)|_V^2\,ds\r)\le(|u_0|_{H^{\theta}(D)}^2+c\,|\varphi|_{L^2(0,T;H)}^2\r)}\\
\vs
\ds{\leq \exp\le(c|u_0|_H^2+c\,|\varphi|_{L^2(0,T;H)}^2\r)\le(|u_0|_{H^{\theta}(D)}^2+c\,|\varphi|_{L^2(0,T;H)}^2\r),}
\end{array}\]
and \eqref{ca96} easily follows.
\end{proof}

Now, let us estimate each term $I_{\e,i}$, for $i=1,\ldots,6$.
Since
\[|I_{\e,1}(t)|_{\mathcal{E}_t}=\sup_{s \in\,[0,t]}\,|e^{sA}z_\e(0)|_{\mathcal{B}^\si_p(D)}+\le(\int_0^t |e^{sA}z_\e(0)|^\beta_{\mathcal{B}^\a_p(D)}\,ds\r)^{\frac 1\beta},\]
according to \eqref{ca103}, for any $t \leq T$ we have
\begin{equation}
\label{I_0}
\begin{array}{l}
\ds{|I_{\e,1}(t)|_{\mathcal{E}_t}\leq c\,|z_\e(0)|_{\mathcal{B}^\si_p(D)}+c\le(\int_0^t s^{-\frac 12(\a-\si)\beta}\,ds\r)^{\frac 1\beta}|z_\e(0)|_{\mathcal{B}^\si_p(D)}\leq c_T\,|z_\e(0)|_{\mathcal{B}^\si_p(D)}.}
\end{array}\end{equation}

Now, for any two processes $u(t)$ and $v(t)$, we define
\[\La(u,v)(t):=\int_0^t e^{(t-s)A}b(u(s),v(s))\,ds,\ \ \ \ t\geq 0.\]
By proceeding as in  \cite[proof of Lemma 6.3]{DPD02}, it is possible to show that 
if $v_1$ and $v_2$ are measurable mappings defined on $[0,T]$, with values in $\mathcal{B}^\a_p(D)$ and $ \mathcal{B}^\si_p(D)$,  respectively,   then 
\begin{equation}
\label{ca101}
|\La(v_i,v_j)(t)|_{\mathcal{B}^\si_p(D)}\leq c\int_0^t (t-s)^{-\frac 12(1+\frac 2p-\a)}|v_1(s)|_{\mathcal{B}^\a_p(D)}|v_2(s)|_{\mathcal{B}^\si_p(D)}\,ds,\ \ \ \ t\leq T,
\end{equation}
and
\begin{equation}
\label{ca102}
|\La(v_i,v_j)(t)|_{\mathcal{B}^\a_p(D)}\leq c\int_0^t (t-s)^{-\frac 12(1+\frac 2p-\si)}|v_1(s)|_{\mathcal{B}^\a_p(D)}|v_2(s)|_{\mathcal{B}^\si_p(D)}\,ds,\ \ \ \ t\leq T,
\end{equation}
both for $(i,j)=(1,2)$ and for $(i,j)=(2,1)$.

It is immediate to check that
\[b(v_\e(t))-b(u^\varphi(t))=b(\rho_\e(t))+b(\rho_\e(t),u^\varphi(t))+b(u^\varphi(t),\rho_\e(t)),\ \ \ t\geq 0,\]
so that, thanks to \eqref{ca103}, from \eqref{ca101} and \eqref{ca102}  we get 
\[|I_{\e,2}|_{\mathcal{E}_t}\leq c_1(t)|\rho_\e|_{L^\beta(0,t;\mathcal{B}^\a_p(D))}\,\le(|\rho_\e|_{C([0,t];\mathcal{B}^\si_p(D))}+|u^\varphi|_{C([0,t];\mathcal{B}^\si_p(D))}\r),\ \ \ t\geq 0,\]
for some continuous increasing function $c_1(t)$, such that $c_1(0)=0$.
Since we are assuming that  $\theta\geq \si+1-2/p$, we have that
$H^{\theta}(D)\hookrightarrow \mathcal{B}^\si_p(D)$,  so that from \eqref{ca96} we obtain\begin{equation}
\label{I_1}
|I_{\e,2}|_{\mathcal{E}_t}\leq c_1(t)\,c_{\gamma}(|u_0|_{H^\theta(D)})|\rho_\e|_{\mathcal{E}_t}\,\le(|\rho_\e|_{C([0,t];\mathcal{B}^\si_p(D))}+1\r).
\end{equation}

Concerning $I_{\e,3}(t)$, we first notice that 
\[b(z_\e(t))=\text{div}\,\le(z_\e(t)\otimes z_\e(t)\r)=\text{div}\,\le(z_\e(t)\otimes z_\e(t)-\e\,\vartheta_{\d(\e)} I\r),\ \ \ \ t\geq 0.\]
Then, since for every $\rho\geq -1$, $\eta\geq 0$ and $p\geq 2$ we have
\[|e^{tA}x|_{\mathcal{B}^\rho_p(D)}\leq c\,t^{-(1+\frac \rho 2-\frac 1p+\frac \eta 2)}|x|_{H^{-(1+\eta)}(D)},\ \ \ \ t>0,\]
from \eqref{ca147} we get
\[\begin{array}{l}
\ds{\le|I_{\e,3}(t)\r|_{\mathcal{B}^\si_p(D)}\leq c\int_0^t(t-s)^{-(1+\frac \si 2-\frac 1p+\frac \eta 2)}\le|\text{div}(z_\e(s)\otimes z_\e(s)-\e\,\vartheta_{\d(\e)}I)\r|_{[H^{-(1+\eta)}(D)]^4}\,ds}\\
\vs
\ds{\leq c\int_0^t(t-s)^{-(1+\frac \si 2-\frac 1p+\frac \eta 2)}\le|z_\e(s)\otimes z_\e(s)-\e\,\vartheta_{\d(\e)}I)\r|_{[H^{-\eta}(D)]^4}\,ds.}
\end{array}\]
In the same way, we have
\[\begin{array}{l}
\ds{\le|I_{\e,3}(t)\r|_{\mathcal{B}^\a_p(D)}\leq c\int_0^t(t-s)^{-(1+\frac \a 2-\frac 1p+\frac \eta 2)}\le|z_\e(s)\otimes z_\e(s)-\e\,\vartheta_{\d(\e)}I)\r|_{[H^{-\eta}(D)]^4}\,ds.}
\end{array}\]
Due to \eqref{ca103}, this implies that we can find $\eta >0$ and $\rho\geq 1$ such that
\begin{equation}
\label{ca104}
\le|I_{\e,3}\r|_{\mathcal{E}_t}\leq c_2(t)\,\le|z_\e\otimes z_\e-\e\,\vartheta_{\d(\e)}I)\r|_{L^\rho(0,T;[H^{-(1+\gamma)}(D)]^4)}.\end{equation}

For $I_{\e,4}(t)$, by using again \eqref{ca101} and \eqref{ca102}, we have
\[|I_{\e,4}(t)|_{\mathcal{B}^\si_p(D)}\leq c\int_0^t (t-s)^{-\frac 12(1+\frac 2p-\a)}|\rho_\e(s)|_{\mathcal{B}^\a_p(D)}|z_\e(s)|_{\mathcal{B}^\si_p(D)}\,ds,
\]
and
\[|I_{\e,4}(t)|_{\mathcal{B}^\a_p(D)}\leq c\int_0^t (t-s)^{-\frac 12(1+\frac 2p-\si)}|\rho_\e(s)|_{\mathcal{B}^\a_p(D)}|z_\e(s)|_{\mathcal{B}^\si_p(D)}\,ds,
\]
and then, according to \eqref{ca103}, we can find $\rho\geq 1$ such that
\begin{equation}
\label{I_3}
|I_{\e,4}|_{\mathcal{E}_t}\leq c_3(t)\,|\rho_\e|_{L^\beta(0,t;\mathcal{B}^\a_p(D))}|z_\e|_{L^\rho(0,T;\mathcal{B}^\si_p(D))}\leq c_3(t)\,|\rho_\e|_{\mathcal{E}_t}|z_\e|_{L^\rho(0,T;\mathcal{B}^\si_p(D))}.
\end{equation}

As for $I_{\e,4}(t)$, for $I_{\e,5}(t)$ we have
\[|I_{\e,5}(t)|_{\mathcal{B}^\si_p(D)}\leq c\int_0^t (t-s)^{-\frac 12(1+\frac 2p-\a)}|u^\varphi(s)|_{\mathcal{B}^\a_p(D)}|z_\e(s)|_{\mathcal{B}^\si_p(D)}\,ds,
\]
and
\[|I_{\e,5}(t)|_{\mathcal{B}^\a_p(D)}\leq c\int_0^t (t-s)^{-\frac 12(1+\frac 2p-\si)}|u^\varphi(s)|_{\mathcal{B}^\a_p(D)}|z_\e(s)|_{\mathcal{B}^\si_p(D)}\,ds.
\]
As we are assuming      $\theta \geq \si+1-2/p$, we have that $\theta>\a-2/p$, so that for any $\eta>0$ such that 
$\theta-\eta >\a-2/p$, we have $H^{1+\theta-\eta}(D)\hookrightarrow \mathcal{B}_p^\a(D)$. By interpolation, this implies
\[|x|_{\mathcal{B}^\a_p(D)}\leq c_\eta\, |x|_{H^{1+\theta-\eta}(D)}\leq c_\eta\,|x|_{H^{1+\theta}(D)}^{1-\eta}|x|_{H^{\theta}(D)}^{\eta},\]
so that 
\[|x|_{\mathcal{B}^\a_p(D)}^{\frac 2{1-\eta}}\leq c_\eta\,|x|_{H^{1+\theta}(D)}^{2}|x|_{H^{\theta}(D)}^{\frac{2\eta}{1-\eta}}.\]
According to \eqref{ca96}, this implies that $u^\varphi \in\,L^{\frac 2{1-\eta}}(0,T;\mathcal{B}^\a_p(D))$ and
\begin{equation}
\label{ca105}
|u^\varphi|_{L^{\frac 2{1-\eta}}(0,T;\mathcal{B}^\a_p(D))}\leq c_{\gamma,\eta}(|u_0|_{H^\theta(D)}).
\end{equation}

Due to condition \eqref{ca103}, since $\theta\geq \si+1-2/p$, we can find $\eta \in\,(0,1)$ such that
\[1-\frac 2\beta<\eta<\theta+\frac 2p-\a.\]
For such $\eta>0$ we have
\begin{equation}
\label{ca110}
\begin{array}{l}
\ds{|I_{\e,5}(t)|_{\mathcal{B}^\si_p(D)}\leq c\,\le(\int_0^t s^{-\frac 12(1+\frac 2p-\a)\frac \beta{\beta-1}}\,ds\r)^{\frac{\beta-1}\beta}\,|u^\varphi|_{L^{\frac 2{1-\eta}}(0,t;\mathcal{B}^\a_p(D))}|z_\e|_{L^{\kappa}(0,t;\mathcal{B}^\si_p(D))},}
\end{array}
\end{equation}
where
\[\frac 1\kappa=1-\le[\frac{1-\eta}2+\frac{\beta-1}\beta\r]=\frac{1}\beta-\frac{1-\eta}2.\]
Analogously, if we pick $\eta>1-2/p$, we get
\[\int_0^t|I_{\e,5}(s)|^\beta_{\mathcal{B}^\a_p(D)}\,ds\leq c\,\le(\int_0^t s^{-\frac 12(1+\frac 2p-\si)}\,ds\r)^\beta\,|u^\varphi|_{L^{\frac 2{1-\eta}}(0,t;\mathcal{B}^\a_p(D))}^\beta|z_\e|_{L^{\frac{2\beta}{2-\beta(1-\eta)}}(0,t;\mathcal{B}^\si_p(D))}.\]
Thanks to \eqref{ca105}, this, together with \eqref{ca110}, implies that there exists some $\rho\geq 1$ such that
\begin{equation}
\label{I_4}
|I_{\e,5}|_{\mathcal{E}_T}\leq c_4(t)\,c_{\gamma}(|u_0|_{H^\theta(D)})|z_\e|_{L^{\rho}(0,t;\mathcal{B}^\si_p(D))}.
\end{equation}

Collecting together \eqref{I_0}, \eqref{I_1}, \eqref{ca104}, \eqref{I_3} and \eqref{I_4}, we conclude that
\[\begin{array}{l}
\ds{|\rho_\e|_{\mathcal{E}_t}\leq c(t)\,c_{\gamma}(|u_0|_{H^\theta(D)})|\rho_\e|_{\mathcal{E}_t}\,\le(|\rho_\e|_{C([0,t];\mathcal{B}^\si_p(D))}+|z_\e|_{L^\rho(0,T;\mathcal{B}^\si_p(D))}+1\r)+c_T\,|z_\e(0)|_{\mathcal{B}^\si_p(D))}}\\
\vs
\ds{+c\,(t)\,c_{\gamma}(|u_0|_{H^\theta(D)})\le(|z_\e|_{L^{\rho}(0,T;\mathcal{B}^\si_p(D))}+\le|z_\e\otimes z_\e-\e\,\vartheta_{\d(\e)}I\r|_{L^\rho(0,T;[H^{-\gamma}(D)]^4)}\r)+\le|\Phi_\e-\Gamma(\varphi)\r|_{\mathcal{E}_T},}
\end{array}\]
for some continuous increasing function $c(t)$ such that $c(0)=0$.

Now, we are going to show that for any sequence $\{\e_n\}_{n \in\,\nat}$ converging to zero, there exists a subsequence $\{\e_{n_k}\}_{k \in\,\nat}\subset \{\e_n\}_{n \in\,\nat}$, such that
\begin{equation}
\label{ca146}
\lim_{k\to\infty}|\rho_{\e_{n_k}}|_{\mathcal{E}_T}=0,\ \ \ \ \mathbb{P}-\text{a.s.}\end{equation}
and this clearly implies that
\[\lim_{\e\to 0}|\rho_{\e}|_{\mathcal{E}_T}=0,\ \ \ \ \mathbb{P}-\text{a.s.}
\]
As $u^{\varphi_\e}_\e(t)-u^\varphi(t)=\rho_\e(t)+z_\e(t)$, for $t \in\,[0,T]$, according to \eqref{ca115} we can conclude that 
\begin{equation}
\label{ca145}
\lim_{\e\to 0}\sup_{t \in\,[0,T]}|u^{\varphi_\e}_\e(t)-u^\varphi(t)|_{\mathcal{B}_p^\si(D)}=0,\ \ \ \ \mathbb{P}-\text{a.s.}
\end{equation}

Let $\{\e_n\}_{n \in\,\nat}$ be a sequence converging to zero.  As we are assuming that $\a<2/p$, there exists $\rho<1$ such that $H^\rho(D)\hookrightarrow \mathcal{B}^\a_p(D)$, so that, due to \eqref{ca30} we have that
\begin{equation}
\label{ca121}
\lim_{\e\to 0} \le|\Phi_\e-\Gamma(\varphi)\r|_{\mathcal{E}_T}=0,\ \ \ \ \mathbb{P}-\text{a.s}.\end{equation}
Then, as a consequence of \eqref{ca115}, \eqref{renormalization} and \eqref{ca121}, we have that there exists a subsequence of $\{\e_n\}_{n \in\,\nat}$, that for simplicity of notations we are still denoting by $\{\e_n\}_{n \in\,\nat}$, and a set $\Omega^\prime \subseteq \Omega$ with $\mathbb{P}(\Omega^\prime)=1$, such that
\begin{equation}
\label{ca150}
\begin{array}{ll}
\ds{\lim_{n\to \infty}}   &  \ds{\le(|z_{\e_n}(\omega)|_{C([0,T];\mathcal{B}^\si_p(D))}+
\le|z_\e(\omega)\otimes z_\e(\omega)-\e\,\vartheta_{\d(\e)}I\r|_{L^\rho(0,T;[H^{-\gamma}(D)]^4)}\r.}\\
&  \vs
&  \ds{\le. +\le|\Phi_{\e_n}(\omega)-\Gamma(\varphi)(\omega)\r|_{\mathcal{E}_T}\r)=0,\ \ \ \ \omega \in\,\Omega^\prime.}
\end{array}\end{equation}

Next, for any $\e>0$  we  denote
\[\tau_\e:=\inf\,\le\{\,t\geq 0\ :\ |\rho_\e(t)|_{\mathcal{B}^\si_{p}(D)}\geq 1\,\r\}.\]
If we fix any $\omega \in\,\Omega^\prime$, in view of \eqref{ca115} there exists some $n_0=n_0(\omega) \in\,\nat$ such that for any $n\geq n_0$ and $t\leq \tau_{\e_n}(\omega)$
\[\begin{array}{l}
\ds{|\rho_\e(\omega)|_{\mathcal{E}_t}\leq 3\,c(t)\,c_{\gamma}(|u_0|_{H^\theta(D)})|\rho_\e(\omega)|_{\mathcal{E}_t}\,+c_T\,|z_\e(0)|_{\mathcal{B}^\si_p(D))}+\le|\Phi_\e(\omega)-\Gamma(\varphi)(\omega)\r|_{\mathcal{E}_T}}\\
\vs
\ds{+c\,(t)\,c_{\gamma}(|u_0|_{H^\theta(D)})\le(|z_\e(\omega)|_{L^{\rho}(0,T;\mathcal{B}^\si_p(D))}+\le|z_\e(\omega)\otimes z_\e(\omega)-\e\,\vartheta_{\d(\e)}I\r|_{L^\rho(0,T;[H^{-\gamma}(D)]^4)}\r),}

\end{array}\]
This implies that if we take $t_0>0$ such that
\[3\,c(t_0)\,c_{\gamma}(|u_0|_{H^\theta(D)})\leq \frac 12,\]
for any $n\geq n_0$ and $t\leq \tau_{\e_n}(\omega)\wedge t_0$
\[\begin{array}{l}
\ds{|\rho_{\e_n}(\omega)|_{\mathcal{E}_t}\leq c\,\le|\Phi_{\e_n}(\omega)-\Gamma(\varphi)(\omega)\r|_{\mathcal{E}_T}}\\
\vs
\ds{+c_T\le(|z_{\e_n}(\omega)|_{C([0,T];\mathcal{B}^\si_p(D))}+\le|z_{\e_n}(\omega)\otimes z_{\e_n}(\omega)-{\e_n}\,\vartheta_{\d(\e)}I\r|_{L^\rho(0,T;[H^{-\gamma}(D)]^4)}\r).}
\end{array}\]

As a consequence of \eqref{ca150}, there exists $n_1=n_1(\omega)\geq n_0$ such that 
\[\begin{array}{l}
\ds{c_T\le(|z_{\e_n}(\omega)|_{C([0,T];\mathcal{B}^\si_p(D))}+\le|z_{\e_n}(\omega)\otimes z_{\e_n}(\omega)-{\e_n}\,\vartheta_{\d(\e)}I\r|_{L^\rho(0,T;[H^{-\gamma}(D)]^4)}\r)}\\
\vs
\ds{+c\,\le|\Phi_{\e_n}(\omega)-\Gamma(\varphi)(\omega)\r|_{\mathcal{E}_T}\leq \frac 12,\ \ \ n\geq n_1,}
\end{array}\]
so that $\tau_{\e_n}(\omega)\wedge t_0=t_0$, for $n\geq n_1$, and
\[\begin{array}{l}
\ds{|\rho_{\e_n}(\omega)|_{\mathcal{E}_{t_0}}\leq c\,\le|\Phi_{\e_n}(\omega)-\Gamma(\varphi)(\omega)\r|_{\mathcal{E}_T}}\\
\vs
\ds{+c_T\le(|z_{\e_n}(\omega)|_{C([0,T];\mathcal{B}^\si_p(D))}+\le|z_{\e_n}(\omega)\otimes z_{\e_n}(\omega)-{\e_n}\,\vartheta_{\d(\e)}I\r|_{L^\rho(0,T;[H^{-\gamma}(D)]^4)}\r).}
\end{array}\]

Now, we can repeat the same argument in the intervals $[(i-1)t_0,it_0]$, for $i=0,\ldots,i_T$, where $i_T$ is the smallest integer such that $i_T t_0\geq T$, and we find
\begin{equation}
\label{ca120}
\begin{array}{l}
\ds{|\rho_{\e_n}(\omega)|_{\mathcal{E}_{(i-1)t_0,i t_0}}\leq i\,c\,\le|\Phi_{\e_n}(\omega)-\Gamma(\varphi)(\omega)\r|_{\mathcal{E}_T}}\\
\vs
\ds{+i\,c_T \le(|z_{\e_n}(\omega)|_{C([0,T];\mathcal{B}^\si_p(D))}+\le|z_{\e_n}(\omega)\otimes z_{\e_n}(\omega)-{\e_n}\,\vartheta_{\d(\e)}I\r|_{L^\rho(0,T;[H^{-\gamma}(D)]^4)}\r),}
\end{array}
\end{equation}
for every $n\geq n_i=n_i(\omega)$, where $n_i(\omega)\geq  n_{i-1}(\omega)$ is such that
\[\begin{array}{l}
\ds{c_T \le(|z_{\e_n}(\omega)|_{C([0,T];\mathcal{B}^\si_p(D))}+\le|z_{\e_n}(\omega)\otimes z_{\e_n}(\omega)-{\e_n}\,\vartheta_{\d(\e)}I\r|_{L^\rho(0,T;[H^{-\gamma}(D)]^4)}\r)}\\
\vs
\ds{+c\,\le|\Phi_{\e_n}(\omega)-\Gamma(\varphi)(\omega)\r|_{\mathcal{E}_T}   \leq \frac 1{2i},\ \ \ \ n\geq n_i.}
\end{array}\]   

Therefore, from \eqref{ca120} we obtain that for any $n\geq n_{i_T}(\omega)$
\[\begin{array}{l}
\ds{|\rho_{\e_n}(\omega)|_{\mathcal{E}_{T}}\leq i\,c\,\le|\Phi_{\e_n}(\omega)-\Gamma(\varphi)(\omega)\r|_{\mathcal{E}_T}}\\
\vs
\ds{+i\,c_T \le(|z_{\e_n}(\omega)|_{C([0,T];\mathcal{B}^\si_p(D))}+\le|z_{\e_n}(\omega)\otimes z_{\e_n}(\omega)-{\e_n}\,\vartheta_{\d(\e)}I\r|_{L^\rho(0,T;[H^{-\gamma}(D)]^4)}\r),}
\end{array}\]
and due to \eqref{ca150} 
we  can conclude that
\[\lim_{n\to \infty}|\rho_{\e_n}(\omega)|_{\mathcal{E}_T}=0.\]
           
\appendix

\section{Appendix}
Here we describe and prove some properties of the solution of the linear problem. As in Section \ref{sec2},  for every $\a\geq 0$ and $\e>0$ we denote by $z^\a_\e(t)$ the solution of the linear problem 
\[
dz(t)=(A-\a)z(t)\,dt+\sqrt{\e}\,dw^{\d(\e)}(t),\ \ \ \ t\geq 0.
\]
The process $z^\a_\e(t)$  is given by
\[z_{\e}^\a(t)=\sqrt{\e}\int_{-\infty}^t e^{(t-s)(A-\a)}\,d\bar{w}^{\d(\e)}(s),\ \ \ \ t \geq 0.
\]
As we already mentioned in Section \ref{sec2},  for any  fixed $\e>0$ the process $z^\a_\e$ belongs to the space $L^p(\Omega;C([0,T];D((-A)^{\beta}))$, for any $T>0$, $p\geq 1$ and $\beta<\gamma/2$. 

We first want to estimate the norm of $z^\a_\e$ in Besov spaces of negative exponent. 

\begin{Lemma}
\label{lemma-A1}
For any $\a\geq 0$ and $\e>0$ and for any $p, \kappa\geq 1$ and $\si<\si^\prime<0$ it holds
\begin{equation}
\label{ca115}
\E\,\sup_{t \in\,[0,T]}|z^\a_\e(t)|^\kappa_{\mathcal{B}^\si_p(D)}\leq c_{\kappa,p}\,\le(\,\e\sum_{k \in\,\mathbb{Z}^2_0}|k|^{2(\si^\prime-1)}\r)^{\frac \kappa 2}.\end{equation}
\end{Lemma}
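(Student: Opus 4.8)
The plan is to reduce everything to Gaussian estimates on the Littlewood--Paley blocks $\d_q z^\a_\e(t)$ and to control the supremum in time by the factorization method. By Jensen's inequality it suffices to prove \eqref{ca115} for $\kappa=m$, where $m$ is an arbitrary integer with $m>\max\{2,\,2/(\si^\prime-\si)\}$; letting $m\to\infty$ then gives all $\kappa\geq 1$. Throughout, $c$ denotes a constant that may depend on $m,p,\si,\si^\prime,T$ but never on $\e$ or on $\a\geq 0$, and I write $z^\a_\e(t)=\sum_{k\in\mathbb{Z}^2_0}z^\a_{\e,k}(t)\,e_k$ in the basis that diagonalises $A$.

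First I would record the basic variance bound. Each scalar process $z^\a_{\e,k}(t)=\sqrt{\e}\,\la_k(\d(\e))\int_{-\infty}^t e^{-(t-s)(|k|^2+\a)}\,d\bar\beta_k(s)$ is stationary Gaussian with $\E|z^\a_{\e,k}(t)|^2=\e\,\la_k^2(\d(\e))/(2(|k|^2+\a))\leq(\e/2)|k|^{-2}$, since $\la_k(\d)\leq 1$ and $\a\geq 0$. As $|e_k(x)|^2$ is independent of $x$, this gives, for every $q\in\nat$ and $x\in D$,
\[\E|\d_q z^\a_\e(t,x)|^2\leq c\,\e\!\!\sum_{2^{q-1}<|k|\leq 2^q}\!\!|k|^{-2}\leq c\,\e\,2^{-2q\si^\prime}\!\!\sum_{2^{q-1}<|k|\leq 2^q}\!\!|k|^{2(\si^\prime-1)}\leq c\,\e\,2^{-2q\si^\prime}\sum_{k\in\mathbb{Z}^2_0}|k|^{2(\si^\prime-1)},\]
using $|k|^{-2\si^\prime}\leq(2^q)^{-2\si^\prime}$ (valid since $\si^\prime<0$) and the convergence of $\sum_k|k|^{2(\si^\prime-1)}$ (again since $\si^\prime<0$). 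Gaussian hypercontractivity gives $\E|\d_q z^\a_\e(t,x)|^r\leq c_r(\E|\d_q z^\a_\e(t,x)|^2)^{r/2}$ for every $r\geq 1$; combining this (via Minkowski's inequality in $\Omega$, together with Jensen for small $m$, and with the uniform-in-$x$ bound above) with the definition of the Besov norm and summing the geometric series $\sum_q 2^{pq(\si-\si^\prime)}$, which converges since $\si<\si^\prime$, one obtains the fixed-time estimate $\E\,|z^\a_\e(t)|^m_{\mathcal{B}^\si_p(D)}\leq c\,(\e\sum_{k}|k|^{2(\si^\prime-1)})^{m/2}$, uniformly in $t\in[0,T]$, $\e$ and $\a$.

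To pass to the supremum I would split $z^\a_\e(t)=e^{t(A-\a)}z^\a_\e(0)+\tilde z^\a_\e(t)$ with $\tilde z^\a_\e(t)=\sqrt{\e}\int_0^t e^{(t-s)(A-\a)}\,d\bar w^{\d(\e)}(s)$. For the first term the heat semigroup is bounded on $\mathcal{B}^\si_p(D)$ uniformly in time, so $\sup_{t\in[0,T]}|e^{t(A-\a)}z^\a_\e(0)|_{\mathcal{B}^\si_p(D)}\leq c\,|z^\a_\e(0)|_{\mathcal{B}^\si_p(D)}$, which is controlled by the previous estimate at $t=0$. For $\tilde z^\a_\e$ I would use the factorization formula: choosing $\mu$ in the nonempty open interval $(1/m,\min\{1/2,(\si^\prime-\si)/2\})$, one has $\tilde z^\a_\e(t)=c_\mu\int_0^t(t-s)^{\mu-1}e^{(t-s)(A-\a)}Y_\mu(s)\,ds$ with $Y_\mu(s)=\sqrt{\e}\int_0^s(s-\sigma)^{-\mu}e^{(s-\sigma)(A-\a)}\,d\bar w^{\d(\e)}(\sigma)$. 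Using $|e^{\tau(A-\a)}x|_{\mathcal{B}^\si_p(D)}\leq c\,e^{-c_0\tau}|x|_{\mathcal{B}^\si_p(D)}$ for some $c_0>0$ (boundedness of the semigroup, the spectral gap of $-A$ on $H$, and $e^{-\a\tau}\leq 1$) together with H\"older's inequality in the time variable on $[0,T]$, with $1/m+1/m^\prime=1$, one gets $\sup_{t\in[0,T]}|\tilde z^\a_\e(t)|_{\mathcal{B}^\si_p(D)}\leq c\,(\int_0^T|Y_\mu(s)|^m_{\mathcal{B}^\si_p(D)}\,ds)^{1/m}$, the time kernel being integrable precisely because $\mu>1/m$. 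Taking expectations, $\E\sup_{t\in[0,T]}|\tilde z^\a_\e(t)|^m_{\mathcal{B}^\si_p(D)}\leq c\int_0^T\E|Y_\mu(s)|^m_{\mathcal{B}^\si_p(D)}\,ds$. Finally $Y_\mu(s)$ is again Gaussian and diagonal, with $\E|\langle Y_\mu(s),e_k\rangle|^2\leq\e\,\la_k^2(\d(\e))\int_0^\infty\tau^{-2\mu}e^{-2\tau(|k|^2+\a)}\,d\tau=c_\mu\,\e\,\la_k^2(\d(\e))(|k|^2+\a)^{2\mu-1}\leq c_\mu\,\e\,|k|^{2(2\mu-1)}$, the integral converging since $\mu<1/2$; repeating the block computation above (now with $|k|^{2(2\mu-1)}\leq|k|^{2(\si^\prime-1)}(2^q)^{2(2\mu-\si^\prime)}$, valid since $2\mu>\si^\prime$, and the Besov series $\sum_q 2^{pq(\si+2\mu-\si^\prime)}$ convergent since $\mu<(\si^\prime-\si)/2$) gives $\E|Y_\mu(s)|^m_{\mathcal{B}^\si_p(D)}\leq c\,(\e\sum_k|k|^{2(\si^\prime-1)})^{m/2}$, uniformly in $s,\e,\a$. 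Summing the two contributions proves \eqref{ca115} for $\kappa=m$, hence, by Jensen, for all $\kappa\geq 1$.

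The only genuinely delicate point is the exponent bookkeeping: one must fit the factorization exponent $\mu$ into the triple window $1/m<\mu<\min\{1/2,(\si^\prime-\si)/2\}$ --- which is exactly where the hypotheses $\si<\si^\prime<0$ and the freedom to enlarge $m$ enter --- and then check that the final constant is independent of $\e$ and of $\a\geq 0$, the $\e$-dependence appearing cleanly as the prefactor $\e^{m/2}$ and the $\a$-dependence being killed by $(|k|^2+\a)^{2\mu-1}\leq|k|^{2(2\mu-1)}$ and $e^{-\a\tau}\leq 1$. The heat-semigroup boundedness and exponential-decay estimates on the periodic Besov spaces $\mathcal{B}^\si_p(D)$ used above are of the same standard type already invoked in Section \ref{sec5}.
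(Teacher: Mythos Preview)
Your proof is correct, and it follows a genuinely different route from the paper's, though both rest on stochastic factorization and Gaussian moment bounds.

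The paper does not work block by block. Instead it uses at the outset the embedding $|z^\a_\e(t)|_{\mathcal{B}^\si_p(D)}\leq c\,|(-A)^{\si/2}z^\a_\e(t)|_{L^p(D)}$, thereby reducing the whole question to an $L^p$ estimate for the shifted process $(-A)^{\si/2}z^\a_\e$. It then applies the factorization formula directly to the \emph{stationary} representation $\sqrt{\e}\int_{-\infty}^t e^{(t-s)(A-\a)}(-A)^{\si/2}\,d\bar w^{\d(\e)}(s)$, so no splitting into an initial piece plus a forward convolution is needed. A single pointwise Gaussian computation on $Y_{\e,\beta}(t,x)$ gives $\E|Y_{\e,\beta}(t,x)|^p\leq c_p\,\e^{p/2}\bigl(\sum_k|k|^{2(\si+2\beta-1)}\bigr)^{p/2}$, and choosing $\beta$ so that $\si+2\beta=\si'$ yields \eqref{ca115} directly.

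Your argument, by contrast, stays in the Besov scale throughout: you estimate each dyadic block $\d_q z^\a_\e$ separately, pull out the factor $2^{-2q\si'}$, and sum the geometric series $\sum_q 2^{pq(\si-\si')}$; this makes the role of the gap $\si'-\si$ completely explicit. The price is that you need the split $z^\a_\e(t)=e^{t(A-\a)}z^\a_\e(0)+\tilde z^\a_\e(t)$ and a separate fixed-time estimate for the first term. The paper's route is a bit shorter once the Sobolev--Besov comparison is granted; yours is more self-contained in the Besov framework and displays more transparently where each constraint on the factorization exponent $\mu$ comes from. Two small remarks: the exponential decay of the semigroup you invoke is not actually needed, since on $[0,T]$ plain boundedness of $e^{\tau A}$ on $\mathcal{B}^\si_p(D)$ suffices for both pieces; and the phrase ``letting $m\to\infty$ then gives all $\kappa\geq 1$'' would read more naturally as ``H\"older's inequality then gives all $\kappa\leq m$, hence all $\kappa\geq 1$''.
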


\begin{proof}
Since $z^\a_\e(t)=(-A)^{-\frac \si 2}(-A)^{\frac\si 2}z^\a_\e(t)$, we have
\begin{equation}
\label{ca142}
|z^\a_\e(t)|_{\mathcal{B}_p^\si(D)}\leq |(-A)^{\frac\si 2}z^\a_\e(t)|_{L^p(D)}.\end{equation}
By using stochastic factorization, for any $\beta \in\,(0,1)$ we have
\[(-A)^{\frac\si 2}z^\a_\e(t)=\frac{\sin \pi\beta}{\pi}\int_{-\infty}^t(t-s)^{\beta-1}e^{(t-s)A}Y_{\e,\beta}(s)\,ds,\]
where
\[Y_{\e,\beta}(s)=\int_{-\infty}^s (s-\rho)^{-\beta}e^{(s-\rho)A} (-A)^{\frac \si 2}dw^{\d(\e)}(\rho).\]
Therefore, 
if we take $p\geq 1/\beta$, we get
\begin{equation}
\label{ca141}
|(-A)^{\frac\si 2}z^\a_\e(t)|^p_{L^p(D)}\leq c_{\beta,p}\le(\int_{-\infty}^t s^{-\frac{(1-\beta)p}{p-1}}e^{-\frac{p}{p-1}s}\,ds\r)^{p-1}\int_{-\infty}^t|Y_{\e,\beta}(s)|_{L^p(D)}^p\,ds.\end{equation}
Now, for any $t \in\,\reals$ and $x \in\,D$
\[\begin{array}{l}
\ds{\E\,|Y_{\e,\beta}(t,x)|^p= c_p\,\e^{\frac p 2}\E\,\le(\,\sum_{k \in\,\mathbb{Z}^2_0}\int_{-\infty}^t\la_{k}(\d(\e)) |k|^{\si}(t-s)^{-\beta}\,e^{-(t-s)(|k|^2+\a)}e_k(x)\,d\beta_k(s)\r)^p}\\
\vs
\ds{\leq c_p\,\e^{\frac p2}\le(\,\sum_{k \in\,\mathbb{Z}^2_0}\int_{-\infty}^t\la_{k}(\d(\e))^2 |k|^{2\si}(t-s)^{-2\beta}\,e^{-2(t-s)(|k|^2+\a)}|e_k(x)|^2\,ds\r)^{\frac p2}}\\
\vs
\ds{\leq c_p\,\e^{\frac p2}\le(\,\sum_{k \in\,\mathbb{Z}^2_0}|k|^{2\si+4\beta-2}\r)^{\frac p 2},}
\end{array}\]
so that, integrating  with respect to $x \in\,D$, for any $\beta<-\si/2$, and hence $p\geq -2/\si$, 
\[\E\,|Y_{\e,\beta}(t)|^p_{L^p(D)} \leq c_{ p}\,\le(\,\e \sum_{k \in\,\mathbb{Z}^2_0}|k|^{2(\si+2\beta-1)}\r)^{\frac p 2}.\]
Therefore, thanks to \eqref{ca142} and \eqref{ca141}, for any $\kappa\geq p\geq 2/\si$ this yields
\[\begin{array}{l}
\ds{\E\,\sup_{t \in\,[0,T]}|z^\a_\e(t)|^\kappa_{\mathcal{B}^\si_p(D)}\leq \E\,\sup_{t \in\,[0,T]}|(-A)^{\frac\si 2}z^\a_\e(t)|_{L^p(D)}^\kappa\leq c_{\kappa,p}\, \E\,\sup_{t \in\,[0,T]}|(-A)^{\frac\si 2}z^\a_\e(t)|_{L^k(D)}^\kappa}\\
\vs
\ds{\leq c_{\kappa, p}\,\le(\,\e \sum_{k \in\,\mathbb{Z}^2_0}|k|^{2(\si+2\beta-1)}\r)^{\frac \kappa 2}.}
\end{array}\]
The general case follows from the H\"older inequality.
\end{proof}

Next, we estimate the norm of $z^\a_\e$ in $L^p(D)$-spaces.

\begin{Lemma}
For every $\a\geq 0$ and $\e>0$ and for every $p\geq 1$ it holds
\begin{equation}
\label{ca50}
\E\,|z^\a_\e(t)|_{L^p(D)}^p\leq c_p(T)\,\le(\e\,\log \le(\frac {1+\d(\e)}{\d(\e)}\r)\r)^{\frac p2},\ \ \ \ t \in\,[0,T].
\end{equation}
\end{Lemma}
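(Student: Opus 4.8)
The plan is to reduce the statement to a pointwise-in-$x$ Gaussian moment bound followed by an elementary series estimate. First I would note that, for fixed $t$ and $x$, the series
\[
z^\a_\e(t,x)=\sqrt\e\sum_{k\in\mathbb{Z}^2_0}\la_k(\d(\e))\,e_k(x)\int_{-\infty}^t e^{-(t-s)(|k|^2+\a)}\,d\bar\beta_k(s)
\]
converges in $L^2(\Omega)$ (by the bound of the next paragraph, which uses $\d(\e)>0$) and defines a centered Gaussian vector; by the It\^o isometry, the independence of the $\bar\beta_k$, and the identity $|e_k(x)|^2=(2\pi)^{-2}$, its covariance satisfies, uniformly in $t$ and $x$,
\[
\E\,|z^\a_\e(t,x)|^2\leq c\,\e\sum_{k\in\mathbb{Z}^2_0}\frac{1}{(1+\d(\e)\,|k|^{2\gamma})(|k|^2+\a)}.
\]
Since a centered Gaussian vector in a fixed finite dimension obeys $\E\,|X|^p\leq c_p\,(\E\,|X|^2)^{p/2}$, and since $\E\,|z^\a_\e(t)|^p_{L^p(D)}=\int_D\E\,|z^\a_\e(t,x)|^p\,dx$ by Tonelli, I would deduce
\[
\E\,|z^\a_\e(t)|^p_{L^p(D)}\leq c_p\,\le(\,\e\sum_{k\in\mathbb{Z}^2_0}\frac{1}{(1+\d(\e)\,|k|^{2\gamma})(|k|^2+\a)}\r)^{p/2}.
\]

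It then remains to show that $\sum_{k\in\mathbb{Z}^2_0}(1+\d\,|k|^{2\gamma})^{-1}(|k|^2+\a)^{-1}\leq c_\gamma\log\frac{1+\d}{\d}$ for $\d\in(0,1)$, uniformly in $\a\geq0$. Since $\a\geq0$ and $|k|\geq1$ give $(|k|^2+\a)^{-1}\leq|k|^{-2}$, it suffices to bound $\sum_{k\in\mathbb{Z}^2_0}(1+\d\,|k|^{2\gamma})^{-1}|k|^{-2}$. I would compare this sum with the integral $\int_{|\xi|\geq1}(1+\d\,|\xi|^{2\gamma})^{-1}|\xi|^{-2}\,d\xi$ (legitimate since the integrand is radial and decreasing), and then, in polar coordinates, substitute $u=\d\,r^{2\gamma}$ to get exactly
\[
\int_{|\xi|\geq1}\frac{d\xi}{(1+\d\,|\xi|^{2\gamma})|\xi|^2}=\frac{\pi}{\gamma}\int_{\d}^{\infty}\frac{du}{u(1+u)}=\frac{\pi}{\gamma}\log\frac{1+\d}{\d}.
\]
The finitely many terms with $|k|=1$ contribute a bounded quantity, which is absorbed into the right-hand side since $\log\frac{1+\d}{\d}\geq\log2$ for $\d\leq1$. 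Plugging this into the previous display gives
\[
\E\,|z^\a_\e(t)|^p_{L^p(D)}\leq c_p(T)\,\le(\,\e\,\log\frac{1+\d(\e)}{\d(\e)}\r)^{p/2},
\]
which is the claim, and the bound is in fact uniform in $t\in[0,T]$ and in $\a\geq0$. Alternatively one could run the argument through the stochastic factorization formula of Lemma~\ref{lemma-A1} taken with $\si=0$, estimating $\E\,|Y_{\e,\beta}(t)|^p_{L^p(D)}$ in the same manner, but the direct computation above is shorter.

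The only genuine obstacle is the series estimate, whose point is that the sum is finite for each fixed $\d>0$ but diverges like $\log(1/\d)$ as $\d\downarrow0$; one must check that the natural split at the scale $|k|\sim\d^{-1/(2\gamma)}$ — equivalently, the substitution above — produces precisely a $\log\frac{1+\d}{\d}$ and not a power of $\d$. Everything else (pointwise Gaussianity, the moment comparison, Tonelli, and the sum-versus-integral comparison) is routine.
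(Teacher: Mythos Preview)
Your proof is correct and follows essentially the same route as the paper: both exploit that $z^\a_\e(t,x)$ is a centered Gaussian at each fixed $(t,x)$, bound its $p$-th moment by the $p/2$-power of its variance, integrate over $D$ via Tonelli, and then reduce to the series $\sum_{k}|k|^{-2}(1+\d|k|^{2\gamma})^{-1}$, which is compared with a radial integral and evaluated by the substitution $u=\d r^{2\gamma}$ to produce $\log\tfrac{1+\d}{\d}$. The only cosmetic differences are that you keep the $\a$ explicitly before discarding it via $(|k|^2+\a)^{-1}\le|k|^{-2}$ and that you carry out the two-dimensional polar-coordinate computation in full; the paper writes the one-dimensional integral directly.
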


\begin{proof}
 For every $p\geq 1$ we have
\[\begin{array}{l}
\ds{\E\,|z^\a_\e(t)|_{L^p(D)}^p=\e^{\frac p2}\,\E\int_D\le|\sum_{k \in\,\mathbb{Z}^2_0}\int_{-\infty}^te^{-(t-s)(|k|^2+\a)}\la_{k}(\d(\e)) e_k(x)\,d\bar{\beta}_k(s)\r|^p\,dx}\\
\vs
\ds{\leq \e^{\frac p2}\,\int_D\le(\,\sum_{k \in\,\mathbb{Z}^2_0} e^{-2(t-s)(|k|^2+\a)}\la_{k}(\d(\e))^2\,  |e_k(x)|^2\,ds\r)^{\frac p2}\,dx}\\
\vs
\ds{\leq |D|\,\e^{\frac p2}\le(\,\sum_{k \in\,\mathbb{Z}^2_0}\frac{1}{|k|^2(1+\d(\e)|k|^{2\gamma})}\r)^{\frac p2}.}
\end{array}\]
Since we have 
\[\begin{array}{l}
\ds{\sum_{k \in\,\mathbb{Z}^2_0}\frac{1}{|k|^2(1+\d(\e)|k|^{2\gamma})}\sim \int_1^{+\infty}\frac 1{x(1+\d(\e)x^\gamma)}\,dx}\\
\vs
\ds{=\frac 1\gamma\int_{\d(\e)}^\infty \frac 1{x(1+x)}\,dx=\frac 1\gamma\le(\log(1+\d(\e))+\log \frac 1{\d(\e)}\r),}
\end{array}\]
this implies that \eqref{ca50} holds.
\end{proof}

 Now, by proceeding as in the proof of \cite[Proposition 2.1]{DPD07}, it is possible to show that for any $p\geq 1$ and $\beta \in\,(0,1/4)$ there exist $\theta=\theta(p,\beta) \in\,(0,1/4)$ and $\rho=\rho(p,\beta) \in\,(0,1)$, and a random variable $K_\e(p,\beta)$ such that for any $\a\geq 0$ and $\e>0$
 \begin{equation}
 \label{sa1}
 |z_\e^\a(t)|_{L^p(D)}\leq (\a\vee 1)^{-\theta}(1+t^\rho)\,K_{\e}(p,\beta),\ \ \ \ \mathbb{P}-\text{a.s.},
 \end{equation}
 where
 \begin{equation}
 \label{kappa1}
 K_\e(p,\beta)=c_{p,\beta}\le(\int_{-\infty}^{+\infty}(1+\si^2)^{-1}|Y_\e(\si)|_{L^p(D)}^{m}\,d\si\r)^{1/m},\end{equation}
for some $m=m(p,\beta)\geq 1$,  and where
 \begin{equation}
 \label{kappa2}
 Y_\e(\si)=\sqrt{\e}\int_{-\infty}^\si (\si-s)^{-\beta}e^{(\si-s)A}\,dw^{\d(\e)}(s).
 \end{equation}
In particular,  we have
 \begin{equation}
 \label{ca40}
 |z_\e^\a|_{C([0,T];L^p(D))}\leq (\a\vee 1)^{-\theta}\,c_{p}(T)\, K_{\e}(p,\beta),\ \ \ \ \mathbb{P}-\text{a.s.}
 \end{equation}

In what follows, it will be important that the random variable $K_\e(p,\beta)$ has all moments finite, with an uniform bound with respect to $\e>0$. 

\begin{Lemma}
Let $p, q\geq 1$ and $\e>0$ be fixed. Then, for any $\eta \in\,(0,1/2\gamma)$ there exists $\beta_\eta \in\,(0,1/4)$ such that
 \begin{equation}
 \label{sa2}
\E \,|K_\e(p,\beta_\eta)|^q\leq c_{p,\beta_\eta,q}\le(\e\,\d(\e)^{-\eta}\r)^{c_{q,p}},\end{equation}

\end{Lemma}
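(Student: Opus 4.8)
The plan is to reduce the $q$-th moment bound on $K_\e(p,\beta_\eta)$ to a bound on the deterministic quantity $\E\,|Y_\e(\si,x)|^2$, which is a lattice sum that can be controlled explicitly in terms of $\d(\e)$; the admissible range of $\beta_\eta$ is then dictated by the convergence of that sum.

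First I would reduce to $q$ large: since $K_\e(p,\beta)\ge0$ and we work on a probability space, $\|K_\e(p,\beta)\|_{L^q(\Omega)}\le\|K_\e(p,\beta)\|_{L^{q_0}(\Omega)}$ for $q\le q_0$, so it suffices to prove the estimate for $q\ge m\vee p$, where $m=m(p,\beta_\eta)$ is the exponent in \eqref{kappa1}. Having fixed such a $q$, I would apply Minkowski's integral inequality twice. With $\mu(d\si)=(1+\si^2)^{-1}d\si$ (a finite measure on $\mathbb{R}$), the first application, with exponent $q/m\ge1$, gives
\[
\big(\E\,|K_\e(p,\beta_\eta)|^q\big)^{m/q}\le c\int_{\mathbb{R}}\big(\E\,|Y_\e(\si)|_{L^p(D)}^q\big)^{m/q}\,\mu(d\si),
\]
and the second, with exponent $q/p\ge1$ on $D$, together with the equivalence of moments for the componentwise-Gaussian variable $Y_\e(\si,x)$, gives
\[
\big(\E\,|Y_\e(\si)|_{L^p(D)}^q\big)^{p/q}\le\int_D\big(\E\,|Y_\e(\si,x)|^q\big)^{p/q}\,dx\le c_q\int_D\big(\E\,|Y_\e(\si,x)|^2\big)^{p/2}\,dx.
\]
Since $Y_\e(\si,\cdot)$ is stationary in $x$ with $\si$-independent covariance, $\E\,|Y_\e(\si,x)|^2$ is a constant, so the problem reduces to estimating that constant.

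Next I would compute it: by the Itô isometry, together with $\la_k(\d)^2=(1+\d|k|^{2\gamma})^{-1}$, $|e_k(x)|^2=(2\pi)^{-2}$, and the identity $\int_0^{+\infty}r^{-2\beta}e^{-2r|k|^2}\,dr=(2|k|^2)^{2\beta-1}\Gamma(1-2\beta)$ (valid for $\beta<1/2$),
\[
\E\,|Y_\e(\si,x)|^2=\e\sum_{k\in\mathbb{Z}^2_0}\la_k(\d(\e))^2|e_k(x)|^2\int_0^{+\infty}r^{-2\beta_\eta}e^{-2r|k|^2}\,dr=c_{\beta_\eta}\,\e\sum_{k\in\mathbb{Z}^2_0}\frac{|k|^{4\beta_\eta-2}}{1+\d(\e)|k|^{2\gamma}}.
\]
Comparing the lattice sum with the integral over $\{|y|\ge1\}\subset\mathbb{R}^2$ and rescaling by $u=\d(\e)^{1/2\gamma}|y|$ yields, for $0<\beta_\eta<\gamma/2$,
\[
\sum_{k\in\mathbb{Z}^2_0}\frac{|k|^{4\beta_\eta-2}}{1+\d(\e)|k|^{2\gamma}}\le c\,\d(\e)^{-2\beta_\eta/\gamma}\int_0^{+\infty}\frac{u^{4\beta_\eta-1}}{1+u^{2\gamma}}\,du\le c_{\beta_\eta}\,\d(\e)^{-2\beta_\eta/\gamma},
\]
the last integral being finite exactly because $0<\beta_\eta<\gamma/2$.

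Finally I would choose $\beta_\eta$: it must satisfy $\beta_\eta\in(0,1/4)$, $\beta_\eta<\gamma/2$, and $2\beta_\eta/\gamma\le\eta$; since $\eta\in(0,1/2\gamma)$ one may take, say, $\beta_\eta=\tfrac12\min\{\eta\gamma/2,\gamma/2,1/4\}$ (here $\eta\gamma/2<1/4$ is automatic). With this choice $\E\,|Y_\e(\si,x)|^2\le c_{\beta_\eta}\,\e\,\d(\e)^{-\eta}$ uniformly in $\si$ and $x$, and substituting back through the two Minkowski inequalities gives \eqref{sa2} with $c_{q,p}=q/2$. I expect the lattice-sum estimate to be the crux: it is what forces $\beta_\eta<\gamma/2$, hence the hypothesis $\eta<1/2\gamma$, and it is what produces the negative power of the noise correlation length $\d(\e)$; the two Minkowski steps and the Gaussian moment comparison are routine bookkeeping.
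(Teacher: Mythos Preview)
Your proof is correct and follows essentially the same route as the paper: reduce the moment of $K_\e$ to moments of $Y_\e(\si)$ in $L^p(D)$, use Gaussianity to pass to the pointwise variance, evaluate the resulting lattice sum $\e\sum_{k}|k|^{4\beta-2}(1+\d(\e)|k|^{2\gamma})^{-1}$ by comparison with a radial integral, rescale to extract the factor $\d(\e)^{-2\beta/\gamma}$, and finally pick $\beta_\eta$ so that $2\beta_\eta/\gamma\le\eta$ and $\beta_\eta<1/4$. The paper simply takes $\beta_\eta=\eta\gamma/2$ (which already lies in $(0,1/4)$ since $\eta<1/2\gamma$) and is terser about the Minkowski/H\"older bookkeeping that you spell out explicitly, but the substance is identical.
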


\begin{proof} It is immediate to check that, for any $q\geq m$, we have
\[\E |K_{\e}(p,\beta)|^q \leq c_{p,\beta,q}\int_{-\infty}^{+\infty}(1+\si^2)^{-1}\E\,|Y_\e(\si)|_{L^p(D)}^{q}\,d\si.\]
 Now, since
 \[Y_\e(\si,x)=\sqrt{\e}\sum_{k \in\,\mathbb{Z}^2_0}\int_{-\infty}^\si (\si-s)^{-\beta}\la_k(\d(\e))e^{-|k|^2(\si-s)}e_k(x)\,d\bar{\beta}_k(s),\]
we have
\[\begin{array}{l}
\ds{\E\,\le| Y_\e(\si,x)\r|^p\leq c_p\,\e^{p/2}\le(\sum_{k \in\,\mathbb{Z}^2_0}|e_k|^2_{L^\infty(D)}\int_0^\infty s^{-2\beta}  \la_k(\d(\e))^2e^{-|k|^2s}\,ds\r)^{\frac p2}}\\
\vs
\ds{\leq c_p\le(\e\sum_{k \in\,\mathbb{Z}^2_0}|k|^{-2(1-2\beta)}(1+\d(\e)|k|^{2\gamma})^{-1}\r)^{\frac p2}=:c_p\,\La_\beta(\e)^{\frac p2}.}
\end{array}\]
This  implies that for any $p, q\geq 1$
\[\E\,|Y_\e(\si)|_{L^p(D)}^q\leq c_{1}(q,p)\,\La_\beta(\e)^{c_{2}(q,p)},\]
for some positive constants $c_{1}(q,p)$ and $c_{2}(q,p)$.
Now, we have
\[\begin{array}{l}
\ds{\La_\beta(\e)\sim \e\int_1^{+\infty}\frac 1{x^{1-2\beta}(1+\d(\e)x^\gamma)}\,dx=\e\,\frac 1\gamma\le(\frac 1{\d(\e)}\r)^{\frac {2\beta} \gamma}\int_{\d(\e)}^{+\infty}y^{\frac{2\beta}\gamma-1}\frac 1{1+y}\,dy.}
\end{array}\]
Therefore, if we pick any $\eta \in\,(0,1/2\gamma)$ and define $\beta_\eta: =\eta\,\gamma/2$, we have
\[\La_{\beta_\eta}(\e)\leq c\,\e\,\d(\e)^{-\eta},\]
and this implies \eqref{sa2}.
 
\end{proof}

\bigskip

In what follows, we shall denote  $\mathcal{H}:=\mathbb{R}^{\mathbb{Z}^2_0}$ and $\mu:=\mathcal{N}(0,(-A)^{-1}/2).$
The Gaussian measure $\mu$ is defined on $\mathcal{H}$, but in fact $\mu(H^\si(D))=1$, if $\si<0$, so that the support of $\mu$ is contained in  $H^\si(D)$, for every $\si<0$.

Now, for any $h \in\,\mathcal{H}$ and $\d>0$, we define
\[h_\d:=\sum_{k \in\,\mathbb{Z}^2_0} \langle h,e_k\rangle \la_{k}(\d)\, e_k,\]
where we recall that, for any $k \in\,\mathbb{Z}^2_0$ and $\d>0$,
\[\la_{k}(\d)=\frac 1{\sqrt{1+\d\,|k|^{2\gamma}}}.\]
Next, for $i=1,2$ we define 
\begin{equation}
\label{ca131}
:(h^i_\d)^2:(x)=\sqrt{2}\,\le[(h^i_\d)^2(x)-\vartheta_\d\r],\ \ \ x \in\,D,\ \ \d>0,\end{equation}
where
\[\vartheta_\d=\frac 1{2(2\pi)^2}\sum_{k \in\,\mathbb{Z}^2_0}\frac{k_1^2}{|k|^4}\la_{k}(\d)^2=\frac 1{2(2\pi)^2}\sum_{k \in\,\mathbb{Z}^2_0}\frac{k_2^2}{|k|^4}\la_{k}(\d)^2.\]
By proceeding as in \cite[Appendix]{DPD02} it is possible to prove that for $i=1,2$
\[
\exists  \lim_{\d\to 0}:(h^i_\d)^2:\ \ \ \ \text{in}\ L^\kappa(\mathcal{H},\mu;H^{\si}(D)),
\]
and
\[
\exists  \lim_{\d\to 0}h^1_\d\,h^2_\d\ \ \ \ \text{in}\ L^\kappa(\mathcal{H},\mu;H^{\si}(D)),
\]
for every $\kappa\geq 1$ and $\si<0$. In particular, due to definition \eqref{ca131}, this implies that
\begin{equation}
\label{ca130}
\exists\lim_{\d\to 0}\le(h_\d\otimes h_\d-\vartheta_\d\,I_{\mathbb{R}^2}\r),\ \ \ \ \text{in}\ L^\kappa(\mathcal{H},\mu;[H^{\si}(D)]^4).
\end{equation}

\begin{Lemma}
For every $\e>0$,  let us denote $z_\e(t):=z^0_\e(t)$. Then, for $\si<0$ and $\kappa, p\geq 1$ we have
\begin{equation}
\label{renormalization}
\lim_{\e\to 0}\,\E\,|z_\e\otimes z_\e-\e\,\vartheta_{\d(\e)}\,I|^\kappa_{L^p(0,T;[H^\si(D)]^4)}=0.\end{equation}
\end{Lemma}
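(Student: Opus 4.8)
The plan is to reduce the spatial‑temporal estimate to a single‑time estimate by stationarity, and then to identify, for fixed $t$, the law of $z_\e(t)$ with that of $\sqrt{\e}\,h_{\d(\e)}$ under $\mu$, so that the renormalized product $z_\e(t)\otimes z_\e(t)-\e\,\vartheta_{\d(\e)}I$ has the same law as $\e$ times the object whose convergence is provided by \eqref{ca130}; the scaling factor $\e$ then drives everything to zero.

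\emph{Step 1 (reduction to a fixed time).} Set $F_\e(t):=z_\e(t)\otimes z_\e(t)-\e\,\vartheta_{\d(\e)}\,I$, which takes values in $[H^\si(D)]^4$. Since $z_\e$ is a stationary process and $F_\e(t)$ is a fixed measurable function of $z_\e(t)$, the law of $F_\e(t)$ in $[H^\si(D)]^4$ is independent of $t$. For $\kappa\geq p$, using $|g|_{L^p(0,T)}\leq T^{\frac 1p-\frac 1\kappa}|g|_{L^\kappa(0,T)}$ with $g(t)=|F_\e(t)|_{[H^\si(D)]^4}$, and then Tonelli's theorem together with stationarity,
\[
\E\,|F_\e|^\kappa_{L^p(0,T;[H^\si(D)]^4)}\leq T^{\frac \kappa p-1}\int_0^T\E\,|F_\e(t)|^\kappa_{[H^\si(D)]^4}\,dt=T^{\frac \kappa p}\,\E\,|F_\e(0)|^\kappa_{[H^\si(D)]^4}.
\]
The case $\kappa<p$ reduces to $\kappa=p$ by Hölder's inequality, so it suffices to bound $\E\,|F_\e(0)|^\kappa_{[H^\si(D)]^4}$.

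\emph{Step 2 (identification of the law).} For fixed $t$ the centered Gaussian random field $z_\e(t)=\sqrt{\e}\int_{-\infty}^t e^{(t-s)A}\,d\bar w^{\d(\e)}(s)$ satisfies $\E\,|\langle z_\e(t),e_k\rangle|^2=\e\int_{-\infty}^t e^{-2(t-s)|k|^2}\la_k(\d(\e))^2\,ds=\e\,\la_k(\d(\e))^2/(2|k|^2)$, i.e. it has covariance operator $\tfrac\e2(-A)^{-1}(I+\d(\e)(-A)^\gamma)^{-1}$. Under $\mu=\mathcal{N}(0,(-A)^{-1}/2)$ the random variable $h_{\d(\e)}=\sum_k\langle h,e_k\rangle\la_k(\d(\e))\,e_k$ is centered Gaussian with exactly the same covariance; hence $z_\e(t)\stackrel{d}{=}\sqrt{\e}\,h_{\d(\e)}$. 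Moreover, expanding on the basis $\{e_k\}$ one checks $\E\,(z_\e^i(t,x))^2=\e\,\vartheta_{\d(\e)}$ for $i=1,2$ and $\E\,(z_\e^1 z_\e^2)(t,x)=0$ (the latter by the symmetry $k\mapsto(k_2,-k_1)$ killing $\sum_k k_1k_2|k|^{-4}\la_k(\d(\e))^2$), so that $\E\,[z_\e(t,x)\otimes z_\e(t,x)]=\e\,\vartheta_{\d(\e)}\,I$ and
\[
F_\e(0)\ \stackrel{d}{=}\ \e\,\bigl(h_{\d(\e)}\otimes h_{\d(\e)}-\vartheta_{\d(\e)}\,I\bigr).
\]

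\emph{Step 3 (conclusion).} By \eqref{ca130} the family $\{h_\d\otimes h_\d-\vartheta_\d I\}_{\d>0}$ converges in $L^\kappa(\mathcal{H},\mu;[H^\si(D)]^4)$ as $\d\to 0$; in particular there are $\d_0>0$ and $M_{\kappa,\si}<\infty$ with $\E\,|h_\d\otimes h_\d-\vartheta_\d I|^\kappa_{[H^\si(D)]^4}\leq M_{\kappa,\si}$ for all $\d\in(0,\d_0)$. Since $\d(\e)\to 0$, we have $\d(\e)<\d_0$ for $\e$ small, and then Steps 1 and 2 give
\[
\E\,|z_\e\otimes z_\e-\e\,\vartheta_{\d(\e)}\,I|^\kappa_{L^p(0,T;[H^\si(D)]^4)}\leq T^{\frac \kappa p}\,\e^\kappa\,M_{\kappa,\si}\ \xrightarrow[\e\to 0]{}\ 0,
\]
which is \eqref{renormalization}. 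The main point requiring care is the distributional identity $z_\e(t)\stackrel{d}{=}\sqrt{\e}\,h_{\d(\e)}$ together with the matching of the counterterm, i.e. verifying that $\E\,[z_\e(t,x)\otimes z_\e(t,x)]$ equals exactly $\e\,\vartheta_{\d(\e)}\,I$; this is the vector‑valued bookkeeping with the basis $\{e_k\}$, and once it is in place the factor $\e^\kappa$ and the uniform bound from \eqref{ca130} close the argument at once.
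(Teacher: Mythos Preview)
Your proof is correct and follows essentially the same approach as the paper: reduce to a fixed time by stationarity, identify the law of $z_\e(t)$ with that of $\sqrt{\e}\,h_{\d(\e)}$ under $\mu$, and conclude via \eqref{ca130} together with the factor $\e^\kappa$. The only cosmetic difference is that the paper obtains the identification more directly from the pathwise identity $z_\e(t)=\sqrt{\e}\,Q_\e z(t)$ with $\mathcal{L}(z(t))=\mu$, rather than matching covariances, and it treats the case $\kappa=p$ first before passing to general $\kappa,p$ by H\"older; your route through $\kappa\geq p$ is equivalent.
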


\begin{proof}
It is immediate to check that
\[z_\e(t)=\sqrt{\e}\,Q_\e z(t),\ \ \ \ t \in\,\mathbb{R},\]
where
\[z(t)=\int_{-\infty}^t e^{(t-s)A}dw(t)=\sum_{k \in\,\mathbb{Z}^2_0}\int_{-\infty}^t e^{-(t-s)|k|^2}\,d\beta_k(s).\]
The process $z(t)$ is  stationary Gaussian  and 
$\mathcal{L}(z(t))=\mu$, for every $t \in\,\mathbb{R}.$ This means that for any $p\geq 1$
\[\begin{array}{l}
\ds{\E\,|z_\e\otimes z_\e-\e\,\vartheta_{\d(\e)}\,I|^p_{L^p(0,T;[H^\si(D)]^4)}=\E\int_0^T|z_\e(t)\otimes z_\e(t)-\e\,\vartheta_{\d(\e)}\,I|^p_{[H^\si(D)]^4}\,dt}\\
\vs
\ds{=\e^p\,T\,\int_{\mathcal{H}}|Q_\e h\otimes Q_\e h-\vartheta_{\d(\e)}I|^p_{[H^\si(D)]^4}\mu(dh)=\e^p\,T\,\int_{\mathcal{H}}|h_{\d(\e)}\otimes  h_{\d(\e)}-\vartheta_{\d(\e)}I|^p_{[H^\si(D)]^4}\,\mu(dh).
}
\end{array}\]
Because of \eqref{ca130}, this implies \eqref{renormalization} in the case $\kappa=p\geq 1$. The case $\kappa, p\geq 1$ follows from the H\"older inequality and the fact that $L^p(D)\subset L^q(D)$, if $p\geq q$.

\end{proof}


\begin{thebibliography}{99}

 \bibitem{BCF_2013} Z. Brze{\'z}niak, S. Cerrai, M. Freidlin, {\em Quasipotential and exit time for 2D Stochastic Navier-Stokes equations driven by   space time white noise}, Probability Theory and Related Fields 162 (2015), 739--793.

 \bibitem{BC_2015} Z. Brze{\'z}niak, S. Cerrai, {\em Large deviations principle for the invariant measures of the 2D stochastic Navier-Stokes equations on a torus}, submitted, arXiv:1509.00077 (2015).
 
\bibitem{DPD02} G.~Da Prato, A.~Debussche, {\em Two-dimensional Navier-Stokes equations driven by a space-time white noise}, Journal of Functional Analysis 196 (2002), pp. 180--210.

\bibitem{DPD07} G.~Da Prato, A.~Debussche, {\em m-dissipativity of Kolmogorov operators corresponding to Burgers equations with space-time white noise}, Potential Analysis 26 (2007), pp. 31--55.

\bibitem{dpz-erg}
 G. Da~Prato, J. Zabczyk,
{\sc Ergodicity for infinite-dimensional systems.} London Mathematical Society Lecture Note Series, 229, Cambridge University Press, Cambridge, 1996.


\bibitem{BDM} A.~Budhiraja, P.~Dupuis, V.~Maroulas, {\em Large deviations for infinite dimensional stochastic dynamical systems}, Annals of Probability 36 (2008), pp. 1390--1420.

\bibitem{flandoli} F.~Flandoli, {\em Dissipativity and invariant measures for stochastic Navier-Stokes equations}, NoDEA, Nonlinear Differential Equations and Applications  (1994), pp. 403--423.

\bibitem{HW} M.~Hairer, H.~Weber, {\em Large deviations for white-noise driven, nonlinear stochastic PDEs in two and three dimensions},
 Ann. Fac. Sci. Toulouse Math. 24 (6) (2015), no. 1, pp. 55-92.
  
  \bibitem{Temam_1983} R. Temam, {\sc Navier-Stokes equations and nonlinear functional analysis},
CBMS-NSF Regional Conference Series in Applied Mathematics, 41,  Society for Industrial and Applied Mathematics (SIAM), Philadelphia,
 PA,  1983.


\end{thebibliography}
\end{document}